\documentclass[11pt,a4paper]{article}

\voffset=-1.6cm \hoffset=-1.37cm \textwidth=16cm \textheight=22.0cm

%\makeatletter
\usepackage{graphicx}
\usepackage{amsmath}
\usepackage{amsfonts}
\usepackage{amssymb}
\usepackage{enumerate}
\usepackage{lscape}
\usepackage{longtable}
\usepackage{rotating}
\usepackage{color}
\usepackage{url}
\usepackage{rotating}
\usepackage{algpseudocode}
\usepackage[ruled]{algorithm}
\usepackage{pgf,tikz}
\usepackage{subfig}
%\usepackage{pseudocode}
%\usepackage{algorithm}
%\usepackage{algorithmic}
% Appendices packages
% \usepackage{titlesec}
\usepackage[titletoc]{appendix}

\usepackage{mathrsfs}
\usepackage{pgfplots}
\usetikzlibrary{arrows}
\usepackage{color}
%% for table
\usepackage{booktabs}
\usepackage{multirow}
\usepackage{mathtools}
\usepackage{makecell}

\newtheorem{theorem}{Theorem}[section]

\newtheorem{assumption}{Assumption}[section]

\newenvironment{proof}[1][Proof]{\noindent \textbf{#1.} }{\hfill$\Box$\par\medskip}

\DeclareMathOperator*{\argmax}{argmax}

\DeclareMathOperator*{\argmin}{argmin}

\newcommand{\beqn}[1]{\begin{equation}\label{#1}}
\newcommand{\eeqn}{\end{equation}}

\setlength{\unitlength}{1mm}
%\parindent 0cm

%%%%%%%%%%%%%%%%%%%%%%%%%%%%%%%%%%%%%%%%%%%%%%%%%%%%
\definecolor{aau2}{rgb}{0.0, 0.5, 0.69}
\definecolor{aau3}{rgb}{0.0, 0.53, 0.74}
\definecolor{aau4}{rgb}{0.0, 0.48, 0.65}
\definecolor{aau5}{rgb}{0.0, 0.45, 0.73}
\definecolor{rsap}{RGB}{130, 36, 51}
\definecolor{gsap}{RGB}{112, 164, 137}
%\definecolor{gsap}{RGB}{170, 201, 182}
%\definecolor{gsap}{RGB}{170, 163, 142}

\definecolor{tud}{rgb}{0.43,0.73,0.11}
\definecolor{verde}{rgb}{0.33,0.53,0.11}

\definecolor{ttffqq}{rgb}{0.0, 0.48, 0.65} %{rgb}{0.43,0.73,0.11}
\definecolor{ffqqqq}{rgb}{0.0, 0.5, 0.69} %{rgb}{1,0,0}
%%%%%%%%%%%%%%%%%%%%%%%%%%%%%%%%%%%%%%%%%%%%%%%%%%%%%%%%%%%%%%

\usetikzlibrary{arrows}

%%%%%%%%%%%%%%%%%%%%%%%%%%%%%%%%%%%%%%%%%%%%%%%%%%%%%%%%%%%%%%%%%%%%%%%%%%%%%%%%%%
\tikzstyle{decision} = [diamond, draw, fill=blue!20,
    text width=4.5em, text badly centered, node distance=3cm, inner sep=0pt]
\tikzstyle{block} = [rectangle, draw, fill=blue!20,
     text centered, rounded corners, minimum height=4em]
\tikzstyle{line} = [draw, -latex']
\tikzstyle{cloud} = [draw, ellipse,fill=red!20, node distance=3cm,
    minimum height=2em]
\tikzstyle{cloud2} = [draw, ellipse,fill=green!20, node distance=3cm,
    minimum height=2em]
\begin{document}

\title{The stochastic multi-gradient algorithm for multi-objective optimization and its application to supervised machine learning}

% \date{July 9, 2019}
\date{\today}

\author{
S. Liu\thanks{Department of Industrial and Systems Engineering,
Lehigh University,
200 West Packer Avenue, Bethlehem, PA 18015-1582, USA
({\tt sul217@lehigh.edu}).}\and
L. N. Vicente\thanks{Department of Industrial and Systems Engineering,
Lehigh University,
200 West Packer Avenue, Bethlehem, PA 18015-1582, USA and
Centre for Mathematics of the University of Coimbra (CMUC)
({\tt lnv@lehigh.edu}). Support for
this author was partially provided by FCT/Portugal under grants
UID/MAT/00324/2019 and P2020 SAICTPAC/0011/2015.}
}

\maketitle

{\small
\begin{abstract}
Optimization of conflicting functions is of paramount importance in decision making, and real world applications frequently involve data that is uncertain or unknown, resulting in multi-objective optimization (MOO) problems of stochastic type. We study the stochastic multi-gradient (SMG) method, seen as an extension of the classical stochastic gradient method for single-objective optimization.

At each iteration of the SMG method, a stochastic multi-gradient direction is calculated by solving a quadratic subproblem, and it is shown that this direction is biased even when all individual gradient estimators are unbiased. We establish rates to compute a point in the Pareto front, of order similar to what is known for stochastic gradient in both convex and strongly convex cases. The analysis handles the bias in the multi-gradient and the unknown a priori weights of the limiting Pareto point.

The SMG method is framed into a Pareto-front type algorithm for calculating an approximation of the entire Pareto front. The Pareto-front SMG algorithm is capable of robustly determining Pareto fronts for a number of synthetic test problems. One can apply it to any stochastic MOO problem arising from supervised machine learning, and we report results for logistic binary classification where multiple objectives correspond to distinct-sources data groups.
\end{abstract}

\bigskip

\begin{center}
\textbf{Keywords:} Multi-Objective Optimization, Pareto Front, Stochastic Gradient Descent, Supervised Machine Learning.
\end{center}
}

\section{Introduction}
\label{introduction}

In multi-objective optimization (MOO) one attempts to simultaneously optimize several, potentially conflicting functions. MOO has wide applications in all industry sectors where decision making is involved due to the natural appearance of conflicting objectives or criteria. Applications span across applied engineering, operations management, finance, economics and social sciences, agriculture, green logistics, and health systems. When the individual objectives are conflicting, no single solution exists that optimizes all of them simultaneously. In such cases, the goal of MOO is then to find Pareto optimal solutions (also known as efficient points), roughly speaking points for which no other combination of variables leads to a simultaneous improvement in all objectives. The determination of the set of Pareto optimal solutions helps decision makers to define the best trade-offs among the several competing criteria.

We start by introducing an MOO problem~\cite{MEhrgott_2005, KMiettinen_2012} consisting of the simultaneous minimization of~$m$ individual functions
\begin{equation}
\begin{array}{rl}
    \min & H(x) = (h_1(x), \ldots, h_m(x))^\top  \\ [0.5ex]
    \text{s.t.} & x \in \mathcal{X},
\end{array}
\label{MOOP_formulation}
\end{equation}
where $h_i: \mathbb{R}^n \to \mathbb{R}$ are real valued functions and
$\mathcal{X} \subseteq \mathbb{R}^n$ represents a feasible region.
We say that the MOO problem is smooth if all objective functions~$h_i$ are continuously differentiable. Assuming that no point may exist that simultaneously minimizes all objectives, the notion of Pareto dominance is introduced to compare any two given feasible points $x,y \in \mathcal{X}$.
One says that~$x$ dominates~$y$ if $H(x) < H(y)$ componentwise. A point~$x \in \mathcal{X}$ is a Pareto minimizer if it is not dominated by any other point in~$\mathcal{X}$. The set of all Pareto minimizers includes the possible multiple minimizers of each individual function. If we want to exclude such points, one can consider the set of strict Pareto minimizers~$\mathcal{P}$, by rather considering a weaker form of dominance (meaning $x$ weakly dominates~$y$ if $H(x) \leq H(y)$ componentwise and $H(x) \neq H(y)$)\footnote{Although not necessary for this paper, note that both dominance conditions above stated induce a strict partial ordering of the points in~$\mathbb{R}^m$. Also, subjacent to such orderings is the cone corresponding to the nonnegative orthant $K = \{ v \in \mathbb{R}^m: v_i \geq 0, i=1,\ldots,m \}$. In fact, $x$ dominates $y$ if and only if $H(y)-H(x) \in \mbox{int}(K)$, and
$x$ weakly dominates $y$ if and only if $H(y)-H(x) \in K\setminus \{0\}$. Broadly speaking any pointed convex cone~$K$ will induce in these two ways a strict partial ordering.}. In this paper we can broadly speak of Pareto minimizers as the first-order optimality condition considered will be necessary for both Pareto optimal sets.
An important notion in MOO is the Pareto front $H(\mathcal{P})$, formed by mapping all elements of $\mathcal{P}$ into the decision space~$\mathbb{R}^m$,
$H(\mathcal{P})= \{H(x): x \in \mathcal{P}\}$.

\subsection{Deterministic multi-objective optimization}
\label{deter_MOOP_review}

If one calculates the Pareto front, or a significant portion of it, using an \textit{a posteriori} methodology, then decision-making preferences can then be expressed upon the determined Pareto information. This approach contrasts with methods that require an \textit{a priori} input from the decision maker in the decision space, such as a utility function or a ranking of importance of the objectives~\cite{MEhrgott_2005,KMiettinen_2012}.

A main class of MOO methods apply scalarization, reducing the MOO problem to a single objective one, whose solution is a Pareto minimizer.
These methods require an {\it a priori} selection of the parameters to produce such an effect. The weighted-sum method is simply to assign each objective function $h_i(x)$ a nonnegative weight~$a_i$ and minimize the single objective $S(x,a)=\sum_{i = 1}^m a_i h_i(x)$ subject to the problem constraints (see, for instance,~\cite{SGass_TSaaty_1955}).
If all objective functions are convex, by varying the weights in a simplex set one is guaranteed to determine the entire Pareto front.
The $\epsilon$--constraint method~\cite{YVHaimes_1971} consists of minimizing one objective, say $h_i(x)$, subject to additional constraints that $h_j(x) \leq  \epsilon_j$ for all $j \neq i$, where $\epsilon_j \geq \min_{x \in \mathcal{X}} h_j(x)$ is an upper bound $h_j$ is allowed to take.
Although scalarization methods are conceptually simple, they exhibit some drawbacks: 1) Weights are difficult to preselect, especially when objectives have different magnitudes. Sometimes, the choice of parameters can be problematic, e.g., producing infeasibility in the $\epsilon$--constraint method;  2) In the weighted-sum method, it is frequently observed (even for convex problems) that an evenly distributed set of weights in a simplex fails to produce an even distribution of Pareto minimizers in the front. 3) It might be impossible to find the entire Pareto front if some of the objectives are nonconvex, as it is the case for the weighted-sum method.
There are scalarization methods which have an {\it a posteriori} flavor like
the so-called normal boundary intersection method~\cite{IDas_JEDennis_1998}, and which are able to produce a more evenly distributed set of points on the Pareto front given an evenly distributed set of weights (however solutions of the method subproblems may be dominated points in the nonconvex case~\cite{EHFukuda_LMGDrummond_2014}).

Nonscalarizing {\it a posteriori} methods attempt to optimize the individual objectives simultaneously in some sense. The methodologies typically consist of iteratively updating a list of nondominated points, with the goal of approximating the Pareto front.
To update such iterate lists, some of these {\it a posteriori} methods borrow ideas from population-based heuristic optimization, including Simulated Annealing, Evolutionary Optimization, and Particle Swarm Optimization. NSGA-II \cite{KDeb_etal_2002} and AMOSA \cite{SBandyopadhyay_etal_2008} are two well-studied population-based heuristic algorithms designed for MOO. However, no theoretical convergence properties can be derived under reasonable assumptions for these methods, and they are slow in practice due to the lack of first-order principles.
Other {\it a posteriori} methods update the iterate lists by applying steps of rigorous MOO algorithms designed for the computation of a single point in the Pareto front. Such rigorous MOO algorithms have resulted from generalizing classical algorithms of single-objective optimization to MOO.

As mentioned above, a number of rigorous algorithms have been developed for MOO by extending single-objective optimization counterparts. A common feature of these MOO methods is the attempt to move along a direction that simultaneously decreases all objective functions. In most instances it is possible to prove convergence to a first-order stationary Pareto point. Gradient descent is a first example of such a single-objective optimization technique that led to the multi-gradient (or multiple gradient) method for MOO~\cite{JFliege_BFSvaiter_2000} (see also \cite{LGDrummond_BFSvaiter_2005,LGDrummond_ANIusem_2004,JADesideri_2012,JADesideri_2014}).
As analyzed in \cite{JFliege_AIFVaz_LNVicente_2018}, it turns out that the multi-gradient method proposed by \cite{JFliege_BFSvaiter_2000} shares the same convergence rates as in the single objective case, for the various cases of nonconvex, convex, and strongly convex assumptions.
Other first-order derivative-based methods that were extended to MOO include proximal methods~\cite{HBonnel_ANIusem_BFSvaiter_2005}, nonlinear conjugate gradient methods~\cite{LRLucambioPerez_LFPrudente_2018}, and trust-region methods~\cite{SQu_MGoh_BLiang_2013,KDVillacorta_PROliveira_ASoubeyran_2014}.
Newton's method for multi-objective optimization, further using second-order information, was first presented in \cite{JFliege_LGDrummond_BFSvaiter_2009} and later studied in \cite{LGDrummond_FMPRaupp_BFSvaiter_2014}. For a complete survey on multiple gradient-based methods see~\cite{EHFukuda_LMGDrummond_2014}.
Even when derivatives of the objective functions are not available for use, rigorous techniques were extended along the same lines from one to several objectives, an example being the
the so-called direct multi-search algorithm~\cite{ALCustodio_etal_2011}.

\subsection{Stochastic multi-objective optimization}
\label{stocha_MOOP_review}

\subsubsection{Single objective}
\label{subsec:SO}

Many practical optimization models involve data parameters that are unknown or uncertain, examples being demand or return. In some cases the parameters are confined to sets of uncertainty, leading to robust optimization, where one tries to find a solution optimized against a worst-case scenario.
In stochastic optimization/programming, data parameters are considered random variables, and frequently some estimation can be made about their probability distributions. Let us consider the unconstrained optimization of a single function $f(x,w)$ that depends on the decision variable $x$ and on the random variable/parameter~$w$.
The goal of stochastic optimization is to seek a solution that optimizes the expectation of~$f$ taken with respect to the random variable
\begin{equation}
\label{ml_expected_risk}
\min \; f(x) \;=\; \mathbb{E}[f(x, w)],
\end{equation}
where $w$ is a random variable defined in a probability space %$w \in \mathbb{R}^{p}$ a random vector
(with probability measure independent from $x$), for which we assume that i.i.d. samples can be observed or generated. %~$w$
An example of interest to us is classification in supervised machine learning, where one wants to build a predictor (defined by $x$) that maps features into labels (the features and labels can be seen as realizations of~$w$) by minimizing some form of misclassification. The objective function~$f(x)$ in~(\ref{ml_expected_risk}) is then called the expected risk (of misclassification), for which there is no explicit form since pairs of features and labels are drawn according to a unknown distribution.

There are two widely-used approaches for solving problem~\eqref{ml_expected_risk}, the sample average approximation (SAA) method and the stochastic approximation (SA) method. Given $N$ i.i.d. samples $\{w^j\}_{j = 1}^N$, one optimizes in
SAA~(see~\cite{AJKleywegt_etal_2002,AShapiro_2003}) an empirical approximation of the expected risk
\begin{equation}
\label{ml_empirical_risk}
\min \; f^N(x) \;=\; \frac{1}{N} \sum_{j = 1}^N f(x, w^j).
\end{equation}
The SA method becomes an attractive approach in practice
when the explicit form of the gradient $\nabla f(x)$ for~\eqref{ml_expected_risk} is not accessible or the gradient $\nabla  f^N(x)$ for~\eqref{ml_empirical_risk} is too expensive to compute when $N$ is large. The earliest prototypical SA algorithm, also known as stochastic gradient (SG) algorithm, dates back to the paper~\cite{HRobbins_SMonro_1951}; and the classical convergence analysis goes back to the works~\cite{KLChung_1954,JSacks_1958}. In the context of solving~\eqref{ml_expected_risk}, the SG algorithm is defined by $x_{k+1} = x_k - \alpha_k \nabla f(x_k, w_k)$, where $w_k$ is a copy of $w$, $\nabla f(x_k, w_k)$ is a stochastic gradient generated according to $w_k$, and $\alpha_k$ is a positive stepsize. When solving problem~\eqref{ml_empirical_risk}, a realization of $w_k$ may just be a random sample uniformly taken from $\{w^1, \ldots, w^N\}$. Computing the stochastic gradient $-\nabla f(x_k, w_k)$ based on a single sample makes each iterate of the SG algorithm very cheap. However, note that only the expectation of $-\nabla f(x_k, w_k)$ is descent for~$f$ at $x_k$, and therefore the performance of the SG algorithm is quite sensitive to the variance of the stochastic gradient. A well-known idea to improve its performance is the use of a batch gradient at each iterate, namely updating each iterate by $x_{k+1} = x_k - \frac{\alpha_k}{|S_k|}\sum_{j \in S_k}\nabla f(x_k, w^j)$, where $S_k$ is a minibatch sample from $\{1, \ldots, N\}$ of size $|S_k|$.  More advanced variance reduction techniques can be found in~\cite{ADefazio_FBach_SLacoste-Julien_2014,RJohnson_TZhang_2013,BTPolyak_ABJuditsky_1992,SShalev-Shwartz_etal_2011} (see the review~\cite{LBottou_FECurtis_JNocedal_2018}).

\subsubsection{Multiple objectives}
Stochastic multi-objective optimization studies scenarios involving both uncertainty and multiple objectives, and founds applications in fields such as finance, energy, transportation, logistics, and supply chain management~\cite{WJGutjahr_APichler_2016}. Generally, when the individual objectives have the form in \eqref{ml_expected_risk}, we face a stochastic multi-objective optimization (SMOO) problem:
\begin{equation}
\begin{array}{rl}
     \min & F(x) = (f_1(x), \ldots, f_m(x))^\top =  (\mathbb{E}[f_1(x,w)], \ldots, \mathbb{E}[f_m(x, w)])^\top \\ [0.5ex]
     \mbox{s.t.} & x \in \mathcal{X},
\end{array}
\label{stochastic_moo}
\end{equation}
where $f_i(x)$ denotes now the $i$-th objective function value and $f_i(x, w)$ is the $i$-th stochastic function of the decision variable $x$ and the random variable $w$. According to the above formulation, multiple objectives can involve common random variables. A typical example is stochastic vehicle routing problem~\cite{MGendreau_OJabali_WRei_2014, JOyola_HArntzen_DLWoodruff_2018} for which one may want to minimize total transportation costs and maximize customer satisfaction simultaneously given stochastic information about demand, weather, and traffic condition. It is also possible that different objectives depend on independent random information. In either case, we denote all the random information/scenarios by the random variable $w$.
%with respect to the random parameters $w \in \mathbb{R}^{m \times p}$ MGendreau_GLaporte_RSeguin_1996
%

In the finite sum case~(\ref{ml_empirical_risk}) one has that $\mathbb{E}[f_i(x,w)]$
is equal to or can be approximated by
\begin{equation}
\label{ml_empirical_risk_moo}
f_i^N(x) \;=\; \frac{1}{N} \sum_{j = 1}^N f_i(x, w^j).
\end{equation}
Our work assumes that $\mathcal{X}$ does not involve uncertainty (see the survey~\cite{FBAbdelaziz_2012}) for problems with both stochastic objectives and constraints).

The main approaches for solving the SMOO problems are classified into two categories~\cite{FBAbdelaziz_1992, FBAbdelaziz_2012, RCaballero_2004}: the \textit{multi-objective} methods and the \textit{stochastic} methods. The multi-objective methods first reduce the SMOO problem into a deterministic MOO problem, and then solve it by techniques for deterministic MOO (see Subsection~\ref{deter_MOOP_review}). The stochastic methods first aggregate the SMOO problem into a single objective stochastic problem and then apply single objective stochastic optimization methods (see Subsection~\ref{subsec:SO}). Both approaches have disadvantages~\cite{RCaballero_2004}. Note that the stochastic objective functions $f_i$, $i = 1, \ldots, m$, may be correlated to each other as they possibly involve the same set of random information given by~$w$. Without taking this possibility into consideration, the multi-objective methods might simplify the problem by converting each stochastic objective to a deterministic counterpart independently of each other. As for the stochastic methods, they obviously inherit the drawbacks of {\it a priori} scalarizarion methods for deterministic MOO. We will nevertheless restrict our attention to multi-objective methods by assuming that the random variables in the individual objectives are independent of each other.

\subsection{Contributions of this paper}

This paper contributes to the solution of stochastic multi-objective optimization (SMOO) problems of the form~\eqref{stochastic_moo} by providing an understanding of the behavior and basic properties of the stochastic multi-gradient (SMG) method, also called stochastic multiple
gradient method~\cite{MQuentin_PFabrice_JADesideri_2018}. Stochastic gradient descent is well studied and understood for single-objective optimization. Deterministic multi-gradient descent is also well understood for MOO. However, little is known yet about stochastic multi-gradient descent for stochastic MOO. The authors in~\cite{MQuentin_PFabrice_JADesideri_2018} introduced and analyzed it, but failed to identify a critical property about the stochastic multi-gradient direction, and as a consequence analyzed the method under strong
and unnatural assumptions. Moreover, they did not present its extension from an algorithm that produces a single point in the Pareto front to one that computes an approximation of the entire Pareto front.

The steepest descent direction for deterministic MOO results from the solution of a subproblem where one tries to compute a direction that is the steepest among all functions, subject to some form of Euclidean regularization. The dual of this problem shows that this is the same as calculating the negative of the minimum-norm convex linear combination of all the individual gradients (and the coefficients of such a linear convex combination form a set of simplex weights). From here it becomes then straightforward how to compute a direction to be used in the stochastic multi-gradient method, by simply feeding into such a subproblem unbiased estimations of the individual gradients (the corresponding weights are an approximation or estimation of the true ones). However it turns out the Euclidean regularization or minimum-norm effect introduces a bias in the overall estimation. A practical implementation and a theoretical analysis of the method have necessarily to take into account the biasedness of the stochastic multi-gradient.

In this paper we first study the bias of the stochastic multi-gradient direction and derive a condition for the amount of biasedness that is tolerated to achieve convergence at the appropriate rates. Such a condition will depend on the stepsize but can be enforced by increasing the batch size used to estimate the individual gradients. Another aspect that introduces more complexity in the MOO case is not
knowing the limiting behavior of the approximate weights generated by the algorithm when using sampled gradients, or even of the true weights if the subproblem would be solved using the true gradients. In other words, this amounts to say that we do not know which point in the Pareto front the algorithm is targeting at.
We thus develop a convergence analysis measuring the expected gap between
$S(x_k,\lambda_k)$ and $S(x_*, a_k )$, for various possible selections of $a_k$ as approximations for $\lambda_*$, where $x_k$ is the current iterate, $\lambda_k$ are the true weights, and
$x_*$ is a Pareto optimal solution (with corresponding weights~$\lambda_*$). The choice $a_k = \lambda_*$ requires however a stronger assumption, essentially saying that $\lambda_k$ identifies well the optimal role of $\lambda_*$.
Our convergence analysis shows that the stochastic multi-gradient algorithm exhibits convergence rates similar as in the single stochastic gradient method, i.e., $\mathcal{O}(1/k)$ for strongly convexity and $\mathcal{O}(1/\sqrt{k})$ for convexity.

The practical solution of many MOO problems requires however the calculation of the entire Pareto front. Having such a goal in mind also for the stochastic MOO case, we propose a Pareto-front multi-gradient stochastic (PF-SMG) method that iteratively updates a list of nondominated points by applying a certain number of steps of the stochastic multi-gradient method at each point of the list. Such process generates a number of points which are then added to the list. The main iteration is ended by
removing possible dominated points from the list.
We tested our Pareto-front stochastic multi-gradient method, using synthesis MOO problems~\cite{ALCustodio_etal_2011} to which noise was artificially added, and then measured the quality of the approximated Pareto fronts in terms of the so-called Purity and Spread metrics. The new algorithm shows satisfactory performance when compared with a corresponding deterministic counterpart.

We have applied the Pareto-Front SMG algorithm to stochastic MOO problems arising from supervised machine learning, in the setting of logistic binary classification where multiple objectives correspond to different sources of data within a set.
The determination of the Pareto front can help identifying classifiers that trade-off such sources or contexts, thus improving the {\it fairness} of the classification process.

\subsection{Organization of this paper}

In the context of deterministic MOO, we review in Section~\ref{MGDA_direction} the first-order necessary condition for Pareto optimality and the subproblems for computing the common descent direction, denoted here by multi-gradient direction. Section~\ref{SMG_description} introduces the Stochastic Multi-Gradient (SMG) algorithm in detail, and Section~\ref{biasedness_analysis} reports on the existence of biasedness in the stochastic multi-gradients used in the algorithm. The convergence rates for both convex and strongly convex cases are derived in Section~\ref{convergence_analysis_smg}. The Pareto-Front Stochastic Multi-Gradient algorithm (PF-SMG) is outlined in Section~\ref{section_psmg_pmg}. Our numerical experiments for synthetic and machine learning problems are reported in Section~\ref{numerical_result}, and the paper is concluded in Section~\ref{conclude} with final remarks and prospects of future work.

\section{Pareto stationarity and common descent direction in the deterministic multi-objective case}
\label{MGDA_direction}

The simplest descent method for solving smooth unconstrained MOO problems, i.e. problem~\eqref{MOOP_formulation} with $\mathcal{X} = \mathbb{R}^n$, is the multi-gradient method proposed originally in~\cite{JFliege_BFSvaiter_2000} and further developed in~\cite{JADesideri_2012,LGDrummond_BFSvaiter_2005}. Each iterate takes a step of the form $x_{k+1} = x_k + \alpha_kd_k$, where $\alpha_k$ is a positive stepsize and $d_k$ is a common descent direction at the current iteration $x_k$.

A necessary condition for a point $x_k$ to be a (strict or nonstrict) Pareto minimizer of~\eqref{MOOP_formulation} is that there does not exist any direction that is first-order descent for all the individual objectives, i.e.,
\begin{equation}
\label{pareto_stationarity}
    \text{range}\left (\nabla J_H(x_k)\right) \cap (-\mathbb{R}^m_{++}) \; = \; \emptyset, %\begin{bmatrix} \nabla h_1(x_k) \\ \vdots \\ h_m(x_k) \end{bmatrix}^\top
\end{equation}
where $\mathbb{R}^m_{++}$ is the positive orthant cone and $\nabla J_H(x_k)$ denotes the Jacobian matrix of~$H$ at $x_k$.
Condition~\eqref{pareto_stationarity} characterizes first-order Pareto stationary. In fact, at such a nonstationary point $x_k$, there must exist a descent direction $d \in \mathbb{R}^n$ such that $\nabla h_i(x_k)^\top d < 0$, $i=1,\ldots,m$, and one could decrease all functions along~$d$.

When $m = 1$ we simply take $d_k = -\nabla h_1(x_k)$ as the steepest descent or negative gradient direction, and this amounts to minimize $\nabla h_1(x_k)^\top d + (1/2)\|d\|^2$ in $d$. In MOO ($m>1$), the steepest common descent direction~\cite{JFliege_BFSvaiter_2000} is defined by minimizing the amount of first-order Pareto stationarity, also in a regularized Euclidean sense,
\begin{equation}\label{subproblem1}
\begin{split}
(d_k, \beta_k) \;\in\; \displaystyle \argmin_{d \in \mathbb{R}^n, \beta \in \mathbb{R}} &\quad \beta + \frac{1}{2} \| d \|^2  \\
\mbox{s.t.} & \quad \nabla h_i(x_k)^\top d - \beta \leq 0, \; \forall i = 1,\ldots, m.
\end{split}
\end{equation}
If~$x_k$ is first-order Pareto stationary, then $(d_k,\beta_k)=(0,0) \in \mathbb{R}^{n+1}$, and if not, $\nabla h_i(x_k)^\top d_k \leq \beta_k < 0$, for all $i = 1,\ldots, m$ (see~\cite{JFliege_BFSvaiter_2000}). The direction~$d_k$ minimizes
$\max_{1 \le i \le m} \{\nabla h_i(x_k)^\top d\} + (1/2) \|d\|^2$.

It turns out that the dual of~\eqref{subproblem1} is the following subproblem
\begin{equation} \label{subproblem2}
\begin{split}
\lambda_k \;\in\; \displaystyle \argmin_{\lambda \in \mathbb{R}^m} & \quad \left\| \sum^m_{i = 1} \lambda_i \nabla h_i(x_k) \right\|^2  \\
 \mbox{s.t.} & \quad \lambda \in \Delta^m,
\end{split}
\end{equation}
where $\Delta^m = \{\lambda: \sum^m_{i = 1} \lambda_i = 1,\lambda_i \geq 0,  \forall i = 1,...,m\}$ denotes the simplex set.  Subproblem~\eqref{subproblem2} reflects the fact that the common descent direction is pointing opposite to the minimum-norm vector in the convex hull of the gradients $\nabla h_i(x_k), i = 1, \ldots, m$. Hence, the common descent direction, called in this paper a negative multi-gradient, is written as $d_k =  - \sum^m_{i = 1} (\lambda_k)_i \nabla h_i(x_k)$. In the single objective case ($m = 1$), one recovers $d_k = - \nabla h_1(x_k)$.
If~$x_k$ is first-order Pareto stationary, then the convex hull of the individual gradients contains the origin, i.e.,
\begin{equation}
\label{pareto_stationarity2}
\exists \lambda \; \in \; \Delta^m \text{  such that  } \sum_{i = 1}^m \lambda_i \nabla h_i(x_k) \;=\; 0.
\end{equation}
When all the objective functions are convex, we have $x_k \in \mathcal{P}$ if and only if $x_k$ is Pareto first-order stationary~\cite{AMGeoffrion_1968,KMiettinen_2012}.

The multi-gradient algorithm~\cite{JFliege_BFSvaiter_2000} consists of taking $x_{k+1} = x_k + \alpha_k d_k$, where $d_k$ results from the solution of any of the above subproblems and $\alpha_k$ is a positive stepsize.
The norm of $d_k$ is a natural stopping criterion.
Selecting $\alpha_k$ either by backtracking until an appropriate sufficient decrease condition is satisfied or by taking a fixed stepsize inversely proportional to the maximum of the Lipschitz constants of the gradients of the individual gradients leads to the classical sublinear rates of $1/\sqrt{k}$ and $1/k$ in the nonconvex and convex cases,
respectively, and to a linear rate in the strongly convex case~\cite{JFliege_AIFVaz_LNVicente_2018}.

\section{The stochastic multi-gradient method}
\label{SMG_description}

Let us now introduce the stochastic multi-gradient (SMG) algorithm for the solution of the stochastic MOO problem~\eqref{stochastic_moo}.
For this purpose let~$\{w_k\}_{k \in \mathbb{N}}$ be a sequence of copies of the random variable $w$. At each iteration we sample stochastic gradients~$g_i(x_k, w_k)$ as approximations of the true gradients $\nabla f_i(x_k)$, $i=1\ldots,m$. The stochastic multi-gradient is then computed by replacing the true gradients $\nabla f_i(x_k)$ in subproblem~\eqref{subproblem2} by the corresponding stochastic gradients~$g_i(x_k, w_k)$, leading to the following subproblem:
\begin{equation} \label{subproblem3}
\begin{split}
\lambda^g(x_k, w_k) \;\in\; \displaystyle \argmin_{\lambda \in \mathbb{R}^m} & \quad \left\| \sum^m_{i = 1} \lambda_i g_i(x_k, w_k) \right\|^2  \\
\mbox{s.t.} & \quad \lambda \in \Delta^m,
\end{split}
\end{equation}
where the convex combination coefficients $\lambda^g_k = \lambda^g(x_k, w_k)$ depend on~$x_k$ and on the random variable $w_k$. Let us denote the stochastic multi-gradient by
\begin{equation} \label{smg}
g(x_k, w_k) \; = \; \sum^m_{i = 1} \lambda_i^g(x_k, w_k) g_i(x_k, w_k).
\end{equation}

Analogously to the unconstrained deterministic case, each iterative update of the SMG algorithm takes the form~$x_{k+1} = x_k - \alpha_kg(x_k, w_k)$, where $\alpha_k$ is a positive step size and $g(x_k, w_k)$ is the stochastic multi-gradient. More generally, when considering a closed and convex constrained set~$\mathcal{X}$ different from $\mathbb{R}^n$, we need to first orthogonally  project $x_k - \alpha_kg(x_k, w_k)$ onto~$\mathcal{X}$ (such projection is well defined and results from the solution of a convex optimization problem). The SMG algorithm is described as follows.

{\linespread{1}\addtocounter{algorithm}{-1}
\renewcommand{\thealgorithm}{{1}} %\tt
\begin{algorithm}[H] % enter the algorithm environment \small
\caption{Stochastic Multi-Gradient (SMG) Algorithm} % give the algorithm a caption
\label{alg:SMG} % and a label for \ref{} commands later in the document
\begin{algorithmic}[1] % enter the algorithmic environment
\par\vspace*{0.1cm}
\item
Choose an initial point $x_0 \in \mathcal{X}$ and a step size sequence $\{\alpha_k\}_{k \in \mathbb{N}} > 0$.
\item {\bf for} $k=0,1,  \ldots$ {\bf do}
 \item \quad Compute the stochastic gradients $g_i(x_{k}, w_k)$ for the individual functions, $i = 1, \ldots, m$.
  \item \quad Solve problem~\eqref{subproblem3} to obtain the stochastic multi-gradient~(\ref{smg}) with $\lambda^g_k \in \Delta^m$.
\item \quad Update the next iterate $x_{k + 1} = P_{\mathcal{X}}(x_k - \alpha_k g(x_k, w_k))$.
\item {\bf end for}
\par\vspace*{0.1cm}
\end{algorithmic}
\end{algorithm}
}
As in the stochastic gradient method, there is also no good stopping criterion for the SMG algorithm, and one may have just to impose a maximum number of iterations.

\section{Biasedness of the stochastic multi-gradient}
\label{biasedness_analysis}

Figure~\ref{fig:SGM} provides us the intuition for Subproblems~\eqref{subproblem2} and~\eqref{subproblem3} and their solutions when $n=m=2$.
In this section for simplicity we will omit the index~$k$.
Let $g_1^1$ and $g_2^1$ be two unbiased estimates of the true gradient $\nabla f_1(x)$ for the first objective function, and $g_1^2$ and $g_2^2$ be two unbiased estimates of the true gradient $\nabla f_2(x)$ for the second objective function. Then, $g_1$ and~$g_2$, the stochastic multi-gradients from solving~\eqref{subproblem3}, are estimates of the true multi-gradient $g$ obtained from solving~\eqref{subproblem2}.

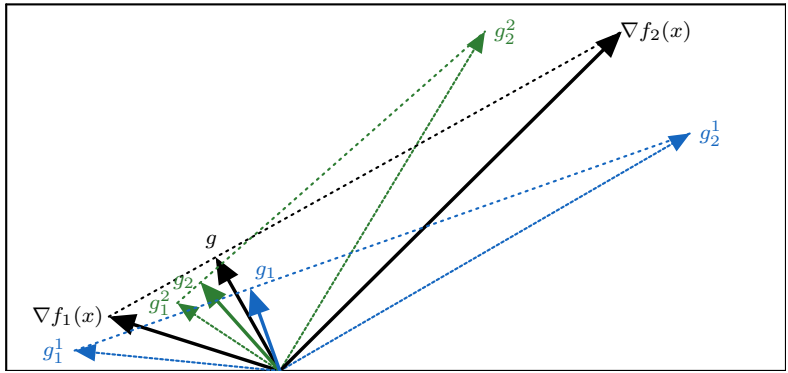
\begin{figure}[H]
\centering
\definecolor{sexdts}{rgb}{0.18,0.49,0.20} % green
\definecolor{ffqqqq}{rgb}{0.082,0.40,0.75} % blue
\definecolor{rvwvcq}{rgb}{0,0,0} % black

\begin{tikzpicture}[line cap=round,line join=round, >=triangle 45,x=1cm,y=1cm, scale=0.9]
\clip(-5.5, 0) rectangle (7.4, 5.5);

% draw the border
\draw[thick] (-4, 0) -- (7.4, 0) node[anchor=north west]{};
\draw[thick] (-4, 5.4) -- (7.4, 5.4) node[anchor=south east]{};
\draw[thick] (-4, 0) -- (-4, 5.4) node[anchor=north west]{};
\draw[thick] (7.4, 0) -- (7.4, 5.4) node[anchor=south east]{};

\draw [->,line width=0.8pt,dash pattern=on 1pt off 1pt,color=sexdts] (0,0) -- (3,5);
\draw [->,line width=0.8pt,dash pattern=on 1pt off 1pt,color=ffqqqq] (0,0) -- (6,3.5);
\draw [->,line width=1.3pt,color=rvwvcq] (0,0) -- (-2.5,0.8);
\draw [->,line width=0.8pt,dash pattern=on 1pt off 1pt,color=ffqqqq] (0,0) -- (-3,0.3);
\draw [->,line width=0.8pt,dash pattern=on 1pt off 1pt,color=sexdts] (0,0) -- (-1.5,1);

\draw [->,line width=1.3pt,color=rvwvcq] (0,0) -- (5,5);
\draw [->,line width=1.3pt, color=rvwvcq] (0,0) -- (-0.938, 1.675);
\draw [->,line width=1.3pt, color=sexdts] (0,0) -- (-1.162,1.311);

\draw [line width=0.8pt,dotted, color=rvwvcq] (-2.5,0.8)-- (5,5);
\draw [line width=0.8pt,dotted, color=ffqqqq] (-3,0.3)-- (6,3.5);
\draw [line width=0.8pt,dotted, color=sexdts] (-1.5,1)-- (3,5);
\draw [->,line width=1.3pt,color=ffqqqq] (0,0) -- (-0.431, 1.213);

%% marks
\begin{scriptsize}
\draw[color=rvwvcq] (-3.1, 0.8) node {$\nabla f_1(x)$};
\draw[color=ffqqqq] (- 3.3, 0.3) node {$g_1^1$};
\draw[color=sexdts] (-1.75,1) node {$g_1^2$};
\draw[color=sexdts] (-1.4,1.3) node {$g_2$};

\draw[color=rvwvcq] (5.5, 5) node {$\nabla f_2(x)$};
\draw[color=sexdts] (3.3, 5) node {$g_2^2$};
\draw[color=ffqqqq] (6.3, 3.5) node {$g_2^1$};
\draw[color=rvwvcq] (-1, 1.9) node {$g$};
\draw[color=ffqqqq] (-0.2, 1.45) node {$g_1$};
\end{scriptsize}
\end{tikzpicture}
\caption{Illustration of the solutions of Subproblems~\eqref{subproblem2} and~\eqref{subproblem3}.\label{fig:SGM}} % \label{fig:illustration_sub3}
\end{figure}

As we mention in the introduction, let $S(x, \lambda) =\textstyle \sum_{i = 1}^m \lambda_i f_i(x)$ denote the weighted true function and $\nabla_x S(x, \lambda) = \textstyle \sum_{i = 1}^m \lambda_i \nabla f_i(x)$ the corresponding gradient.

When $m=1$, recall that is classical to assume that the stochastic gradients are unbiased estimates of the corresponding true gradients.
In the MOO case ($m>1$), even if $g_i(x, w)$ are unbiased estimates of $\nabla f_i(x) $ for all $ i = 1, \ldots, m$, the stochastic multi-gradient $g(x, w)$ resulting from solving~\eqref{subproblem3} is a biased estimate of~$\mathbb{E}_{w}[\nabla_x S(x, \lambda^g)]$, where $\lambda^g$ are the convex combination coefficients associated with $g(x, w)$, or even of the true multi-gradient $\nabla_x S(x, \lambda)$, where $\lambda$ are now the coefficients that result from solving~\eqref{subproblem2} with true gradients $\nabla f_i(x)$, $i=1,\ldots,m$. Basically, the solution of the QP~\eqref{subproblem3} acts as a mapping on the unbiased stochastic gradients~$g_i(x, w), i = 1, \ldots, m$, introducing biasedness in the mapped outcome (the stochastic multi-gradient~$g(x, w))$.

Let us observe the amount of biasedness by looking at the norm of the expected error \linebreak $\|\mathbb{E}_{w}[g(x, w) -\nabla_xS(x, \lambda^g)]\|$ in an experiment with $n=4$ and $m=2$, where each objective function is a finite sum of the form~(\ref{ml_empirical_risk_moo}) with $N=3,000$. Given two $4$-dimensional vectors $v_1 = (-31.22,	-26.57,	-17.20,	18.77)$ and $v_2 = (-7.50,	15.62,	-16.69,	15.31)$, we randomly sampled two sets of $3,000$ four-dimensional vectors from normal distributions with means~$v_1, v_2$, respectively, and covariance~$0.2I_4$, where $I_4$ is a four-dimensional identity matrix.
These two sets of $3,000$ vectors are the component gradient vectors of the two finite sum objectives.
% The true gradients of the two objectives were first randomly generated with norms 37.18 and 40.64 respectively.
A batch size specifies for each objective how many samples in a batch are uniformly drawn from the set of the $3,000$~stochastic gradients. For each batch size, we drew $10,000$ batches of $g_1, g_2$ and took means over $g(x, w) - \nabla_x S(x, \lambda^g)$. Figure~\ref{fig:biasedness} (a) confirms the existence of biasedness in the stochastic multi-gradient $g(x, w)$ as an approximation to $\mathbb{E}_{w}[\nabla_x S(x, \lambda^g)]$. It is observed that the biasedness does indeed decrease as the batch size increases, and that it eventually vanishes in the full batch regime (where Subproblems~\eqref{subproblem2} and~\eqref{subproblem3} become identical).

\begin{figure}[H]
   \centering
   \subfloat[][Using~$\lambda^g$.]{\includegraphics[width=.47\textwidth, height =.4\textwidth]{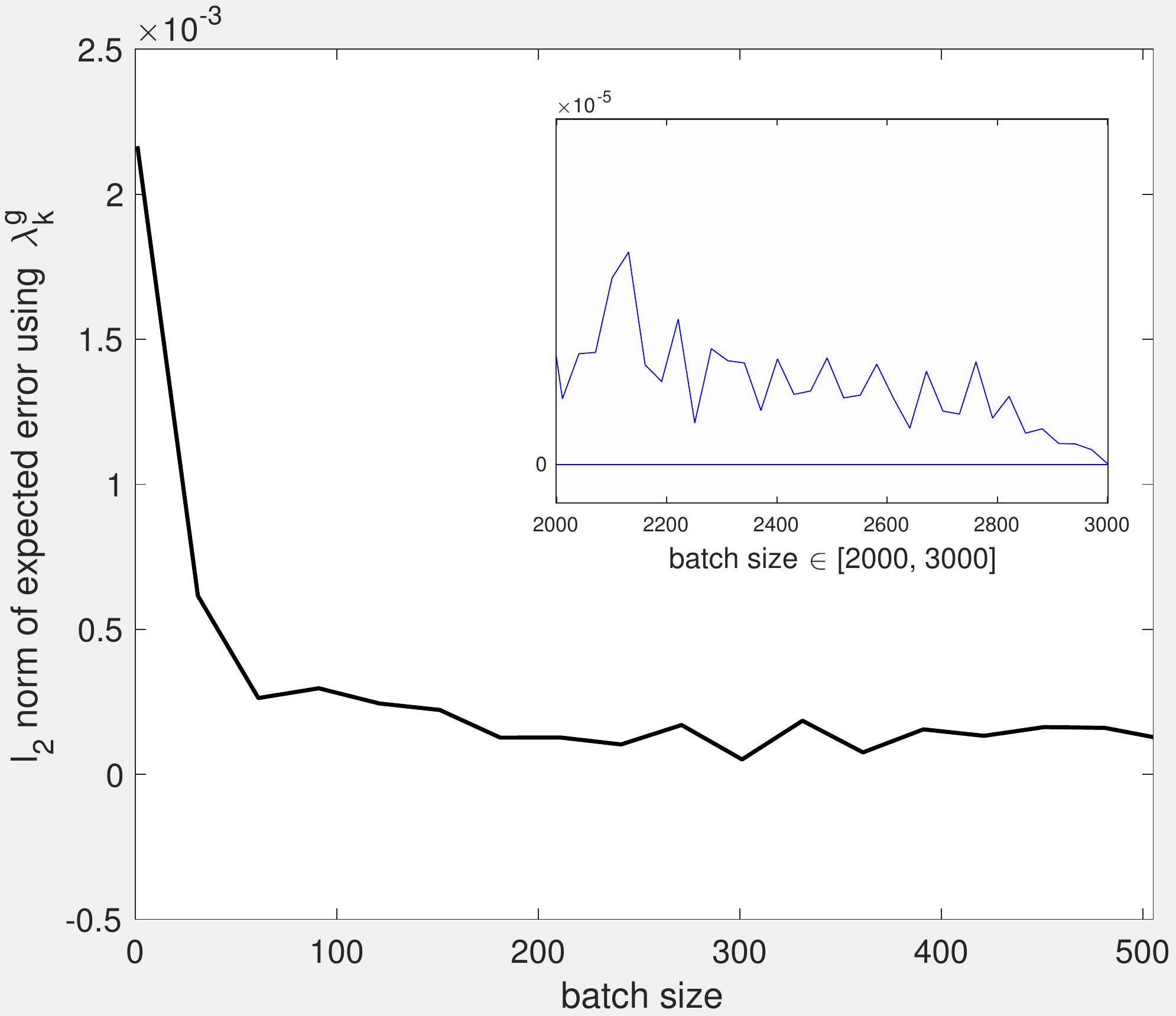}}\quad
   \subfloat[][Using~$\lambda$.]{\includegraphics[width=.47\textwidth, height =.4\textwidth]{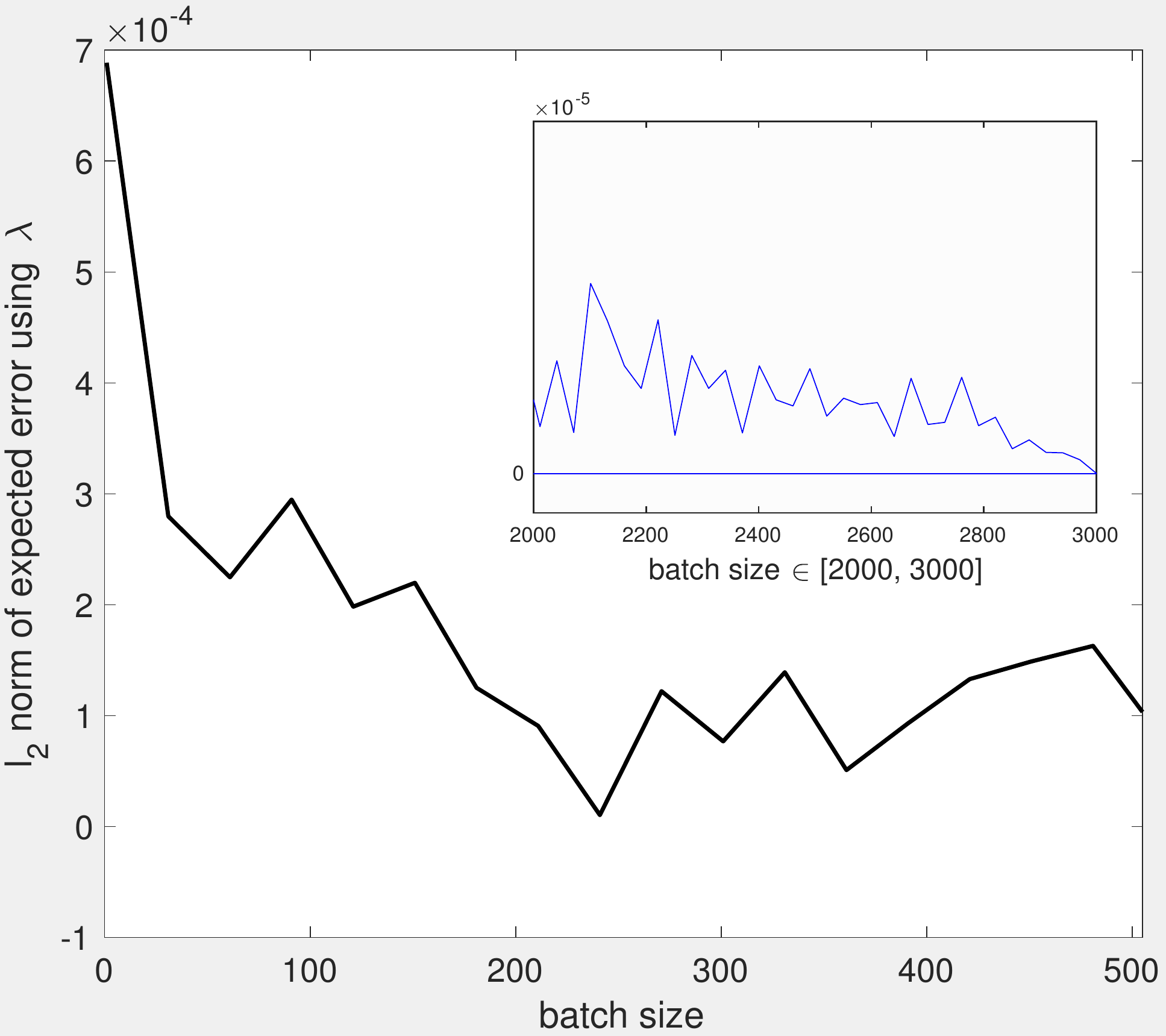}}
   \caption{Biasedness decreases as the batch size increases: $m=2$, $n=4$, and $N = 3000$.}
   \label{fig:biasedness}
\end{figure}

% \begin{figure}[H]
%     \centering
%     \includegraphics[width = 0.480 \textwidth]{smg-moo/Figures/biasedness_1.png}
%     \caption{Biasedness decreases as the batch size increases: $m=2$, $n=4$, and $N = 3000$.}
%     \label{fig:biasedness}
% \end{figure}

Biasedness is also present when we look at the norm of the expected error~$\|\mathbb{E}_{w}[g(x, w)] -\nabla_xS(x, \lambda)\|$, using the true coefficients~$\lambda$. In the same setting of the previous experiment,  Figure~\ref{fig:biasedness} (b) shows that biasedness still exists, although in a smaller quantity than when using~$\lambda^g$.

% \begin{figure}[H]
%     \centering
%     \includegraphics[width = 0.480 \textwidth]{smg-moo/Figures/biasedness_2.pdf}
%     \caption{Biasedness decreases as the batch size increases: $m=2$, $n=4$, and $N = 3000$.}
%     \label{fig:biasedness2}
% \end{figure}

\section{Convergence rates for the stochastic multi-gradient method}
\label{convergence_analysis_smg}

In this section, the convergence theory of the simple stochastic gradient method is extended to stochastic MOO. We prove sublinear convergence rates of the order of $1/k$ and $1/\sqrt{k}$ for strongly convex and convex cases, respectively, when approaching a point in the Pareto front.
Let us start by formulating the assumptions that are common to both cases. First of all, as in the case $m=1$, we assume that all the objective functions in problem~\eqref{stochastic_moo} are sufficiently smooth.

\begin{assumption} \label{ass:Lipschitz}(\textbf{Lipschitz continuous gradients})
All objective  functions $f_i: \mathbb{R}^n \to \mathbb{R}$ are continuously differentiable with gradients $\nabla f_i$ Lipschitz continuous with Lipschitz constants $L_i > 0$, $i = 1, \ldots, m$, i.e.,
$
         \|\nabla f_i(x) - \nabla f_i(\bar{x}) \| \;\leq\; L_i \| x - \bar{x} \|,  \forall (x, \bar{x}) \in \mathbb{R}^n \times \mathbb{R}^n.
$
\end{assumption}

Assumption~\ref{ass:Lipschitz} implies smoothness of the weighted true function $S(x, \lambda)= \textstyle \sum_{i=1}^m \lambda_i f_i(x)$. In fact, given any $\lambda \in \Delta^m$, the weighted true function $S(x,\lambda)$ has Lipschitz continuous gradients in $x$ with constant $L = \textstyle \max_{{1 \leq i \leq m}}\{L_i\}$, i.e.,
\begin{equation}
    \| \nabla_x S(x, \lambda) - \nabla_x S(\bar{x}, \lambda) \| \;\leq\; L\| x - \bar{x} \|, \quad \forall (x, \bar{x}) \;\in\; \mathbb{R}^n \times \mathbb{R}^n.
    \label{ass1:Lipschitz}
\end{equation}

We will use $\mathbb{E}_{w_k}[\cdot]$ to denote the expected value taken with respect to $w_k$. Notice that $x_{k+1}$ is a random variable depending on $w_k$ whereas $x_k$ does not.

Now we propose our assumptions on the amount of biasedness and variance of the stochastic multi-gradient~$g(x_k,w_k)$. As commonly seen in the literature of the standard stochastic gradient method~\cite{LBottou_FECurtis_JNocedal_2018, ANemirovski_2009}, we assume that the individual stochastic gradients~$g_i(x_k, w_k), i = 1, \ldots, m$, are unbiased estimates of the corresponding true gradients and that their variance is bounded by the size of these gradients (Assumptions~(a) and~(c) below). However, an assumption is also needed to bound the amount of biasedness
of the stochastic multi-gradient in terms of the true gradient norm and the stepsize~$\alpha_k$ (Assumption~(b) below).

\begin{assumption}
\label{ass:unbiasedness}
For all objective functions $f_i$, $i = 1, \ldots, m$, and iterates $k \in \mathbb{N}$, the individual stochastic gradients~$g_i(x_k, w_k)$ satisfy the following:
\begin{enumerate}
    \item[(a)]\textbf{(Unbiasedness)} $\mathbb{E}_{w_k}[g_i(x_k, w_k)] \;=\; \nabla f_i(x_k)$.
    \item[(b)]\textbf{(Bound on the first moment)}
    There exist positive scalars $C_i > 0$ and $\hat{C}_i > 0$ such that
\begin{equation}
\label{ass2::bounded_diff_gi}
    \mathbb{E}_{w_k}\left[\left\|g_i(x_k, w_k) - \nabla f_i(x_k)\right\| \right] \;\leq\; \alpha_k \left(C_i + \hat{C}_i \|\nabla f_i(x_k)\| \right).
\end{equation}
    \item[(c)](\textbf{Bound on the second moment})
There exist positive scalars $G_i > 0$ and $\hat{G}_i > 0$ such that
\begin{equation*}
\label{ass2::bounded_variance}
\begin{split}
\mathbb{V}_{w_k}[g_i(x_k, w_k)] \;\leq\; G^2_i + \hat{G}^2_i\|\nabla f_i(x_k)\|^2.
\end{split}
\end{equation*}
\end{enumerate}
\end{assumption}

In fact, based on inequality \eqref{ass2::bounded_diff_gi}, one can derive an upper bound for the biasedness of the stochastic multi-gradient
\begin{equation*}
\begin{split}
\left\|\mathbb{E}_{w_k}[g(x_k, w_k) -\nabla_x S(x_k, \lambda^g_k)]\right\| \;\leq\; & \mathbb{E}_{w_k}\left[\left\|g(x_k, w_k) -\nabla_xS(x_k, \lambda^g_k)\right\|\right] \\
\;=\; & \mathbb{E}_{w_k}\left[\left \|\sum_{i=1}^m (\lambda^g_k)_i(g_i(x_k, w_k) - \nabla f_i(x_k)) \right\|\right] \\
\;\leq\; & \sum_{i =1}^m \mathbb{E}_{w_k}[\|g_i(x_k, w_k) - \nabla f_i(x_k)\|] \\
\;\leq\; & \alpha_k \left (\sum_{i=1}^m C_i + \sum_{i=1}^m \hat{C}_i \|\nabla f_i(x_k)\| \right),
\end{split}
\end{equation*}
where the first inequality results from Jensen's inequality in the context of probability theory. As a consequence, we have
\begin{equation}
\label{ass:sing_grad_diff_bound}
    \|\mathbb{E}_{w_k}[g(x_k, w_k) -\nabla_xS(x_k, \lambda^g_k)]\|  \;\leq\; \alpha_k\left(M_1 + M_F \sum_{i=1}^m \|\nabla f_i(x_k)\| \right)
\end{equation}
with $M_1 = \sum_{i=1}^m C_i$ and $M_F = \max_{1\leq i \leq m} \hat{C}_i$.
Note that we could have imposed directly the assumption
\[
    \|\mathbb{E}_{w_k}[g(x_k, w_k) -\nabla_xS(x_k, \lambda^g_k)]\|  \;\leq\; \alpha_k\left(M_1 + M_F \left\|\mathbb{E}_{w_k}[\nabla_xS(x_k, \lambda^g_k)]\right\|\right),
\]
from which then~\eqref{ass:sing_grad_diff_bound} would have easily followed.
However we will see later that we will also need the more general version stated in Assumption \ref{ass:unbiasedness}~(b).

Using Assumptions~\ref{ass:unbiasedness} (a) and (c), we can generalize the bound on the variance of the individual stochastic gradients $g_i(x_k, w_k)$ to the stochastic multi-gradient $g(x_k, w_k)$. In fact we first note that
\begin{equation*}
\begin{split}
\mathbb{E}_{w_k}[\|g_i(x_k, w_k)\|^2] \;=\; & \mathbb{V}_{w_k}[g_i(x_k, w_k)] + \big\|\mathbb{E}_{w_k}[g_i(x_k, w_k)]\big\|^2\\ \vspace{0.5ex}
 \;\leq\; & G^2_i + (\hat{G}^2_i + 1)\|\nabla f_i(x_k)\|^2,
\end{split}
\end{equation*}
from which we then obtain
\begin{equation*}
\label{ass:second_moment}
\begin{split}
\mathbb{E}_{w_k}[\|g(x_k, w_k)\|^2] \;=\; &\mathbb{E}_{w_k}\left[\left\|\sum_{i=1}^m (\lambda_k^g)_i g_i(x_k, w_k)\right\|^2\right] \\
 \;\leq\; &
\mathbb{E}_{w_k}\left[m\sum_{i=1}^m\|g_i(x_k, w_k)\|^2\right] \\
\;\leq\; & m \sum_{i=1}^m \left(G^2_i + (\hat{G}^2_i + 1)\|\nabla f_i(x_k)\|^2\right)\\
\;=\; & G^2 + G_V^2\sum_{i=1}^m \|\nabla f_i(x_k)\|^2
\end{split}
\end{equation*}
with $G^2 = m\sum_{i=1}^m G_i^2$ and $G_V^2 = m \max_{1\leq i \leq m} (\hat{G}_i^2 + 1)$.  Note that the obtained inequality is consistent with imposing directly a bound of the form
\begin{equation*}
\mathbb{E}_{w_k}\left[\|g(x_k, w_k)\|^2\right] \;\leq\; G^2 + G_V^2 \left\|\mathbb{E}_{w_k}[\nabla_x S(x_k, \lambda^g_k)]\right\|^2
\end{equation*}
from the fact that~$\textstyle\|\mathbb{E}_{w_k}[\nabla_x S(x_k, \lambda^g_k)]\|^2 \leq m\sum_{i=1}^m \|\nabla f_i(x_k)\|^2$.

We will need the iterates to lie in a bounded set, one of the reasons being the need to bound the norm of the true gradients. We can achieve this by asking $\mathcal{X}$ to
be a bounded set (in addition to being closed and convex).

\begin{assumption}
\label{ass:limit_pts}
The feasible region $\mathcal{X} \subset \mathbb{R}^n$ is a bounded set.
\end{assumption}

The above assumption implies the existence of an upper bound on the diameter of the feasible region, i.e., there exists a positive constant $\Theta$ such that
\begin{equation}
\label{compact_region_bound}
\displaystyle \max_{x, y \in \mathcal{X}} \| x - y\| \;\leq\; \textstyle \Theta \;<\; \infty.
\end{equation}
Note that from Assumption \ref{ass:Lipschitz} and \eqref{compact_region_bound}, the norm of the true gradient of each objective function is bounded, i.e., $\|\nabla f_i(x) \| \leq M_{\nabla} + L\Theta$, for $i = 1, \ldots, m$, and any $x \in \mathcal{X}$, where $M_{\nabla}$ denotes the largest of the norms of the $\nabla f_i$ at an arbitrary point of~$\mathcal{X}$.
For conciseness, denote $L_{\nabla S} = M_1 + m M_F(M_{\nabla}+L\Theta)$ and $L^2_g = G^2 + mG_V^2(M_{\nabla}+L\Theta)^2$. Hence, we have
\begin{equation}
\label{ineq:theorem_bound1}
\left\|\mathbb{E}_{w_k}\left[g(x_k, w_k) - \nabla_x S(x_k, \lambda^g_k)\right]\right\| \;\leq\; \alpha_k L_{\nabla S}
\end{equation}
and
\begin{equation}
\label{ineq:theorem_bound2}
\mathbb{E}_{w_k}\left[\|g(x_k, w_k)\|^2\right] \;\leq\; L_g^2.
\end{equation}

Lastly, we need to bound the sensitivity of the solution of the Subproblem~\eqref{subproblem2},
a result that follows locally from classical sensitivity theory but that we assume globally too.

\begin{assumption}
\label{ass:subprob_lip}
(\textbf{Subproblem Lipschitz continuity}) The optimal solution of Subproblem~\eqref{subproblem2} is a Lipschitz continuous function of the parameters $\{\nabla f_i(x), {1 \leq i \leq m}\}$, i.e., there exists a scalar  $\beta > 0$ such that
\begin{equation*}
 \|\lambda_k - \lambda_s\| \;\leq\; \beta \left \|\left[(\nabla f_1(x_k) - \nabla f_1(x_s))^\top, \ldots, (\nabla f_m(x_k) - \nabla f_m(x_s))^\top \right]\right \|.
 \end{equation*}
\end{assumption}

The above assumption states that the function $\lambda = \lambda (v_1, \ldots, v_m)$, where $v_i \in \mathbb{R}^n$, $i = 1, \ldots, m$, is Lipschitz continuous on $(v_1, \ldots, v_m)$. Recall that the only change from Subproblem~\eqref{subproblem2} to~\eqref{subproblem3} is replacing the true gradients by the corresponding stochastic gradients. As a consequence, the optimal solutions of Subproblems~\eqref{subproblem2} and~\eqref{subproblem3} satisfy
\begin{equation}
\label{subprob_lip_lambda1}
 \begin{split}
\mathbb{E}_{w_k}[\|\lambda^g_k - \lambda_k\| ] \;\leq\; & \beta \mathbb{E}_{w_k} \Big[\big\|[(g_1(x_k, w_k) - \nabla f_1(x_k))^\top, \ldots, (g_m(x_k, w_k) - \nabla f_m(x_k))^\top ]\big\| \Big] \\
\;\leq\; & \beta\sum_{i=1}^m \mathbb{E}_{w_k}\big[\|g_i(x_k, w_k) - \nabla f_i(x_k)\| \big] \\
\;\leq\; & \alpha_k (\beta L_{\nabla S}),
\end{split}
 \end{equation}
where $L_{\nabla S}$ is the constant defined in~\eqref{ineq:theorem_bound1}. Since $\nabla_x S(x, \lambda)$ is a linear function of $\lambda$,
\begin{equation*}
    \left\|\nabla_x S(x_k, \lambda_k^g) - \nabla_x S(x_k, \lambda_k)\right\| \;\leq\; M_{S} \left\|\lambda^g_k - \lambda_k\right\|,
\end{equation*}
with $M_S = \sqrt{mn} (M_{\nabla} + L \Theta)$.
By taking expectation over $w_k$ and using \eqref{subprob_lip_lambda1}, one obtains
\begin{equation}
\label{subprob_lip_lambda2}
    \mathbb{E}_{w_k}\left[\left\|\nabla_x S(x_k, \lambda_k^g) - \nabla_x S(x_k, \lambda_k)\right\|\right] \;\leq\; \alpha_k (\beta L_{\nabla S} M_S ).
\end{equation}

\subsection{The strongly convex case}
\label{subsec:SC}

Strongly convexity is the most widely studied setting in stochastic gradient methods. In the context of MOO we impose it in all individual functions.

\begin{assumption}
\label{ass:strongconv}
(\textbf{Strong convexity}) All objective functions $f_i: \mathbb{R}^n \to \mathbb{R}$ are strongly convex, i.e., for all $i = 1, \ldots, m$, there exists a scalar $c_i > 0$ such that
\begin{equation*}
f_i(\bar{x}) \;\geq\; f_i(x) + \nabla f_i(x)^\top(\bar{x} - x) + \frac{c_i}{2} \| \bar{x} - x \|^2, \quad \forall (x,\bar{x}) \in \mathbb{R}^n \times \mathbb{R}^n.
\end{equation*}
\end{assumption}

Under this assumption all the individual functions have a unique minimizer in~$\mathcal{X}$ that is also a Pareto minimizer. We also conclude that the weighted function $S(x, \lambda)$ is strongly convex with constant $c = \textstyle \min_{1 \leq i \leq m}\{c_i\}$, i.e.,
\begin{equation}
    S(\bar{x}, \lambda) \;\geq\; S(x, \lambda) + \nabla_x S(x, \lambda)^\top(\bar{x} - x) + \frac{c}{2} \| \bar{x} -x \|^2, \quad \forall (x,\bar{x})\in \mathbb{R}^n \times \mathbb{R}^n.
    \label{ass:strong_cov}
\end{equation}

We are finally ready to prove a convergence rate under strong convexity, showing that a certain weighted function has the potential to decay sublinearly at the rate of~$1/k$, as it happens in the stochastic gradient method for $m=1$.
We will use $\mathbb{E}[\cdot]$ to denote the expected value taken with respect to the joint distribution of $\{w_k, k \in \mathbb{N}\}$.
The stepsize choice~$\alpha_k$ is of diminishing type, in other words it obeys
\begin{equation*}
\label{diminishing_step}
    \sum_{k = 1}^\infty \alpha_k \;=\; \infty \quad \text{and} \quad \sum_{k = 1}^\infty \alpha_k^2 \;<\; \infty.
\end{equation*}

\begin{theorem} (\textbf{sublinear convergence rate under strong convexity})
\label{th:strongcov}
Let Assumptions~\ref{ass:Lipschitz}--\ref{ass:strongconv} hold and $x_*$ be any point in $\mathcal{X}$. Consider a diminishing step size sequence
$\alpha_k = \frac{2}{c(k+1)}$. The sequence of iterates generated by Algorithm~\ref{alg:SMG} satisfies
 \begin{equation*}
 \label{theorem1_result}
  \min_{s = 1, \ldots, k} \mathbb{E}[ S(x_s, \lambda_s)] -  \mathbb{E}[S({x}_*, \bar{\lambda}_k)] \;\leq\; \frac{2L_g^2 + 4\Theta(L_{\nabla S} + \beta L_{\nabla S} M_S)}{c(k+1)},
 \end{equation*}
 where  $\bar{\lambda}_k = \sum_{s=1}^k \frac{s}{\sum_{s = 1}^k s} \lambda_s \in \Delta^m$.
\end{theorem}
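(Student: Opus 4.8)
The plan is to mimic the classical strongly convex stochastic gradient analysis, tracking the squared distance $\|x_k - x_*\|^2$ to the reference point and converting the resulting one-step recursion into a $1/k$ rate by a weighted telescoping argument. First I would start from the update $x_{k+1} = P_{\mathcal{X}}(x_k - \alpha_k g(x_k, w_k))$ and use nonexpansiveness of the Euclidean projection onto the closed convex set $\mathcal{X}$ (recall $x_* \in \mathcal{X}$) to obtain $\|x_{k+1} - x_*\|^2 \le \|x_k - \alpha_k g(x_k, w_k) - x_*\|^2$. Expanding the square and taking the conditional expectation $\mathbb{E}_{w_k}[\cdot]$ yields three familiar terms: the previous squared distance, a cross term $-2\alpha_k\,\mathbb{E}_{w_k}[g(x_k,w_k)]^\top (x_k - x_*)$, and a second-moment term, which is immediately controlled by $\alpha_k^2 L_g^2$ via~(\ref{ineq:theorem_bound2}).

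The heart of the argument is the cross term, and here is where the biasedness enters. I would add and subtract $\nabla_x S(x_k, \lambda_k)$, writing $\mathbb{E}_{w_k}[g(x_k,w_k)] = \nabla_x S(x_k,\lambda_k) + \big(\mathbb{E}_{w_k}[g(x_k,w_k)] - \nabla_x S(x_k,\lambda_k)\big)$. For the exact part, strong convexity of $S(\cdot,\lambda_k)$ in~(\ref{ass:strong_cov}) gives $\nabla_x S(x_k,\lambda_k)^\top(x_k - x_*) \ge S(x_k,\lambda_k) - S(x_*,\lambda_k) + \tfrac{c}{2}\|x_k - x_*\|^2$, producing the contraction factor $(1-\alpha_k c)$ and the function gap $S(x_k,\lambda_k) - S(x_*,\lambda_k)$. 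For the error part I would bound $\|\mathbb{E}_{w_k}[g(x_k,w_k)] - \nabla_x S(x_k,\lambda_k)\|$ by splitting it (triangle inequality) into the multi-gradient bias, controlled by $\alpha_k L_{\nabla S}$ through~(\ref{ineq:theorem_bound1}), and the weight-mismatch term $\|\nabla_x S(x_k,\lambda_k^g) - \nabla_x S(x_k,\lambda_k)\|$, controlled in expectation (via Jensen) by $\alpha_k \beta L_{\nabla S} M_S$ through~(\ref{subprob_lip_lambda2}); Cauchy--Schwarz together with the bounded diameter~(\ref{compact_region_bound}) then turns this into a contribution of size $\alpha_k^2\,\Theta(L_{\nabla S} + \beta L_{\nabla S} M_S)$. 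Crucially, because Assumption~\ref{ass:unbiasedness}(b) carries a factor of $\alpha_k$, every bias contribution enters at order $\alpha_k^2$ rather than $\alpha_k$, which is exactly what preserves the rate. Collecting terms gives the one-step inequality $2\alpha_k\,\mathbb{E}_{w_k}[S(x_k,\lambda_k) - S(x_*,\lambda_k)] \le (1-\alpha_k c)\|x_k - x_*\|^2 - \mathbb{E}_{w_k}[\|x_{k+1}-x_*\|^2] + \alpha_k^2 A$, with $A = L_g^2 + 2\Theta(L_{\nabla S} + \beta L_{\nabla S} M_S)$.

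To finish I would take full expectation, substitute $\alpha_s = 2/(c(s+1))$ so that $1-\alpha_s c = (s-1)/(s+1)$, multiply the recursion at stage $s$ by $s$, and sum over $s=1,\ldots,k$. The distance terms telescope precisely: with $b_s := \tfrac{c}{4}s(s-1)\,\mathbb{E}[\|x_s-x_*\|^2]$ the right-hand side collapses to $b_s - b_{s+1}$, whose sum is $b_1 - b_{k+1} \le 0$ since $b_1=0$ and $b_{k+1}\ge 0$, while the residual sum $\tfrac{A}{c}\sum_{s=1}^k \tfrac{s}{s+1}$ is bounded by $Ak/c$. This yields $\sum_{s=1}^k s\,\mathbb{E}[S(x_s,\lambda_s)-S(x_*,\lambda_s)] \le Ak/c$. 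The last ingredient is the linearity of $S(x_*,\cdot)$ in the weights: since $\sum_{s=1}^k s\,S(x_*,\lambda_s) = \tfrac{k(k+1)}{2} S(x_*,\bar\lambda_k)$ by definition of $\bar\lambda_k$, while $\sum_{s=1}^k s\,\mathbb{E}[S(x_s,\lambda_s)] \ge \tfrac{k(k+1)}{2}\min_{s\le k}\mathbb{E}[S(x_s,\lambda_s)]$, dividing by $\tfrac{k(k+1)}{2}$ delivers exactly the stated $2A/(c(k+1))$ bound.

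I expect the main obstacle to be the bookkeeping around the two sources of error absent in the single-objective case: the intrinsic bias of the stochastic multi-gradient relative to $\nabla_x S(x_k,\lambda_k^g)$, and the discrepancy between the stochastic weights $\lambda_k^g$ and the true weights $\lambda_k$. Both must be absorbed at order $\alpha_k^2$, which hinges on the stepsize-scaled bound in Assumption~\ref{ass:unbiasedness}(b) and on the subproblem Lipschitz stability in Assumption~\ref{ass:subprob_lip}; making these combine cleanly with the \emph{moving} comparison weights $\lambda_s$—reconciled through the linearity of $S$ in $\lambda$ and the weighted average $\bar\lambda_k$—is the step requiring the most care.
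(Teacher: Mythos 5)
Your proposal is correct and follows essentially the same route as the paper's proof: nonexpansiveness of the projection, the null-term decomposition of the cross term into the strong-convexity part, the multi-gradient bias (bounded via~\eqref{ineq:theorem_bound1}), and the weight-mismatch term (bounded via~\eqref{subprob_lip_lambda2}), followed by the $s$-weighted telescoping under $\alpha_s = 2/(c(s+1))$ and the final reconciliation of the moving weights through the linearity of $S(x_*,\cdot)$ and the definition of $\bar{\lambda}_k$. The only cosmetic difference is your explicit $b_s$ notation for the telescoping quantity (and the phrasing ``multiply by $s$,'' which in fact means multiplying by $s/(2\alpha_s)$, as your stated intermediate bounds confirm); the substance matches the paper step for step.
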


\begin{proof} For any $k \in \mathbb{N}$, considering that the projection operation is non-expansive, one can write
 \begin{equation}
    \begin{split}
     \mathbb{E}_{w_k}[\| x_{k+1} - x_*\|^2]  \;=\; &\mathbb{E}_{w_k}[\|P_{\mathcal{X}}( x_{k} - \alpha_kg(x_k, w_k)) - x_*\|^2]\\
     \;\leq\; & \mathbb{E}_{w_k}[\| x_{k} - \alpha_kg(x_k, w_k) - x_*\|^2]\\
     \;=\; & \| x_{k}- x_*\|^2 + \alpha_k^2 \mathbb{E}_{w_k}[\| g(x_k, w_k)\|^2] \\
     & - 2\alpha_k \mathbb{E}_{w_k}[g(x_k, w_k)]^\top (x_k - x_*).
     \label{proof_ineq_step0}
    \end{split}
  \end{equation}
 Adding the null term $2\alpha_k(\mathbb{E}_{w_k}[\nabla_x S(x_k, \lambda_k^g)] - \mathbb{E}_{w_k}[\nabla_x S(x_k, \lambda_k^g)] + \nabla_x S(x_k, \lambda_k) - \nabla_x S(x_k, \lambda_k))$ to the right-hand side yields
  \begin{equation}
      \begin{split}
     \mathbb{E}_{w_k}[\| x_{k+1} - x_*\|^2]  \;\leq\; & \| x_{k}- x_*\|^2 + \alpha_k^2\mathbb{E}_{w_k}[\| g(x_k, w_k)\|^2] - 2\alpha_k\nabla_x S(x_k, \lambda_k)^\top (x_k - x_*) \\
    & + 2\alpha_k \| \mathbb{E}_{w_k}[g(x_k, w_k)  - \nabla_x S(x_k, \lambda^g_k)]\| \| x_k - x_*\| \\
    & + 2\alpha_k \|\mathbb{E}_{w_k}[\nabla_x S(x_k, \lambda^g_k) - \nabla_x S(x_k, \lambda_k)]\| \|x_k - x_*\|.
    \end{split}
    \label{proof_ineq_step1}
\end{equation}
Choosing $\lambda = \lambda_k$, $x = x_k$, and $\bar{x} = x_*$ in inequality \eqref{ass:strong_cov}, one has
 \begin{equation}
 \label{theorm:strongconv}
 \nabla_x S(x_k, \lambda_k)^\top(x_k - x_*)  \;\geq\; S(x_k, \lambda_k) - S(x_*, \lambda_k) + \frac{c}{2}\| x_k - x_*\|^2.
 \end{equation}
 Then, plugging inequalities \eqref{ineq:theorem_bound1}, \eqref{ineq:theorem_bound2}, \eqref{subprob_lip_lambda2}, and \eqref{theorm:strongconv} into \eqref{proof_ineq_step1}, we obtain
 \begin{align*}
  \mathbb{E}_{w_k} [\| x_{k+1} - x_*\|^2]  \;\leq\; & \;  (1-\alpha_k c) \| x_{k}- x_*\|^2 + \alpha_k^2 (L_g^2 + 2\Theta(L_{\nabla S} + \beta L_{\nabla S} M_S)) \\
  & - 2\alpha_k \mathbb{E}_{w_k} [S(x_k, \lambda_k) - S(x_*, \lambda_k)].
 \end{align*}

For simplicity denote $M = L_g^2 + 2\Theta(L_{\nabla S} + \beta L_{\nabla S} M_S)$. Using $\alpha_k = \frac{2}{c(k+1)}$, and rearranging the last inequality,
 \begin{align*}
\mathbb{E}_{w_k} [S(x_k, \lambda_k) - S(x_*, \lambda_k) ] \;\leq\; & \frac{(1-\alpha_k c) \| x_{k}- x_*\|^2 + \alpha_k^2 M - \mathbb{E}_{w_k} [\| x_{k+1} - x_*\|^2]}{2\alpha_k } \\
\;\leq\; & \frac{c(k-1)}{4} \| x_k - x_*\|^2 - \displaystyle \frac{c(k+1)}{4} \mathbb{E}_{w_k} [\| x_{k+1} - x_*\|^2] + \frac{M}{c(k+1)}.
\end{align*}
Now we replace $k$ by $s$ in the above inequality. Taking the total expectation, multiplying by $s$ on both sides, and summing over $s = 1, \ldots,k$ yields
 \begin{align*}
\sum_{s=1}^k s(\mathbb{E}[ S(x_s, \lambda_s)] - \mathbb{E}[S(x_*, \lambda_s)]) \;\leq\; & \sum_{s=1}^k \left(\frac{cs(s-1)}{4} \mathbb{E}[\left\lVert x_s - x_*\right\lVert^2 ] - \frac{cs(s+1)}{4} \mathbb{E} [\left\lVert x_{s+1} - x_*\right\lVert^2]\right)  \\
& + \sum_{s=1}^k\frac{s}{c(s+1)}M\\
\;\leq\; & - \frac{c}{4}k(k+1) \mathbb{E} [\left\lVert x_{k+1} - x_*\right\lVert^2] + \sum_{s=1}^k\frac{s}{c(s+1)} M\\
\;\leq\; & \frac{k}{c} M.
\end{align*}
Dividing both sides of the last inequality by $\textstyle \sum_{s = 1}^k s$ gives us
\begin{equation}
\label{proof_ineq_step2}
\begin{split}
\frac{\sum_{s=1}^k s \mathbb{E}[ S(x_s, \lambda_s)] - \sum_{s=1}^k s \mathbb{E}[S(x_*, \lambda_s)] }{\sum_{s = 1}^k s}
& \;\leq \; \frac{kM}{c\sum_{s = 1}^k s}  \;\leq\; ~\frac{2M}{c(k+1)}.
 \end{split}
 \end{equation}
The left-hand side is taken care as follows
\begin{equation}
\label{proof_ineq_step3}
 \begin{split}
   \min_{s = 1, \ldots, k} \mathbb{E}[ S(x_s, \lambda_s)] - \mathbb{E}[S(x_*, \bar{\lambda}_k)] \;\leq\; \sum_{s=1}^k \frac{s}{\sum_{s = 1}^k s}\mathbb{E}[S(x_s, \lambda_s)]  - \sum_{s=1}^k \frac{s}{\sum_{s = 1}^k s} \mathbb{E}[S(x_*, \lambda_s)],
 \end{split}
 \end{equation}
where $\bar{\lambda}_k = \sum_{s=1}^k \frac{s}{\sum_{s = 1}^k s} \lambda_s$. The proof is finally completed by combining \eqref{proof_ineq_step2} and \eqref{proof_ineq_step3}.
\end{proof}

Since the sequence $\{\lambda_k\}_{k \in \mathbb{N}}$ generated by Algorithm \ref{alg:SMG} is bounded, it has a limit point $\lambda_*$. Assume that the whole sequence $\{\lambda_k\}_{k \in \mathbb{N}}$ converges to $\lambda_*$. Let $x_*$ be the unique minimizer of $S(x, \lambda_*)$.  Then, $x_*$ is a Pareto minimizer associated with $\lambda_*$. Since $\bar{\lambda}_k$ is also converging to $\lambda_*$, $\mathbb{E}[S({x}_*, \bar{\lambda}_k)] $ converges to $\mathbb{E}[S({x}_*, \lambda_*)] $. Hence, Theorem~\ref{th:strongcov} states that $\textstyle \min_{1 \leq s \leq k} \mathbb{E}[ S(x_s, \lambda_s)] $ converges to $\mathbb{E}[S({x}_*, \lambda_*)] $.  The result of Theorem~\ref{th:strongcov} indicates that the approximate rate of such convergence is $1/k$.
Rigorously speaking, since we do not have $\lambda_*$ on the left-hand side but rather $\bar{\lambda}_k$, such left-hand side is not even guaranteed to be positive. The difficulty comes from the fact that $\lambda_*$ is only defined at convergence and the multi-gradient method cannot anticipate which optimal weights are being approached, or in equivalent words which weighted function is being minimized at the end.
Such a difficulty is resolved if we assume that $\lambda_k$ approximates well the role of $\lambda_*$ at the Pareto front.

\begin{assumption}
\label{ass::stronger_conv}
Let $x_*$ be the Pareto minimizer defined above.
For any $x_k$, one has
\begin{equation*}
\nabla_x S(x_*, \lambda_k)^\top (x_k - x_*) \;\geq\; 0.
\end{equation*}
\end{assumption}

In fact notice that $\nabla_x S(x_*, \lambda_*) = 0 $ holds according to the Pareto stationarity condition~\eqref{pareto_stationarity2}, and thus this
assumption
would hold with $\lambda_k$ replaced by $\lambda_*$.

A well-known equivalent condition to~\eqref{ass:strong_cov}
is
\begin{equation*}
(\nabla_x S(x, \lambda) - \nabla_x S(\bar{x}, \lambda) )^\top (x - \bar{x}) \;\geq\; c\|x - \bar{x}\|^2, \quad \forall (x, \bar{x}) \;\in\; \mathbb{R}^n \times \mathbb{R}^n.
\end{equation*}
Choosing $x = x_k$, $\bar{x} = x_*$, and $\lambda = \lambda_k$ in the above inequality and using Assumption~\ref{ass::stronger_conv} leads to
\begin{equation}
\label{ass:strong_cov2}
\nabla_x S(x_k, \lambda_k)^\top (x_k - x_*) \;\geq\; c\|x_k - x_*\|^2,
\end{equation}
based on which one can derive a stronger convergence result\footnote{Let us see how Assumption~\ref{ass::stronger_conv} relates to Assumption~H5 used in~\cite{MQuentin_PFabrice_JADesideri_2018}. These authors have made the strong assumption that the noisy values satisfy a.s. $f_i(x,w) - f_i(x,w) \geq C_i \| x-x^\perp\|^2$ for all $x$, where $x^\perp$ is the point in $\mathcal{P}$ closest to~$x$ (and $C_i$ a positive constant). From here they easily deduce from the convexity of the individual functions $f_i$ that $\mathbb{E}_{w_k}[ g(x_k,w_k)]^\top (x_k-x_k^\perp) \geq 0$, which then leads to establishing that $\mathbb{E}[\| x_{k}- x_k^\perp\|^2] = \mathcal{O}(1/k)$.

Notice that $\mathbb{E}_{w_k}[ g(x_k,w_k)]^\top (x_k-x_k^\perp) \geq 0$ would also result from~\eqref{ass:strong_cov2} (with $x_*$ replaced by $x_k^\perp$) if $g(x_k,w_k)$ was an unbiased estimator of $\nabla_x S(x_k, \lambda_k)$.}.

\begin{theorem}
Let Assumptions~\ref{ass:Lipschitz}--\ref{ass::stronger_conv} hold and $x_*$ be the Pareto minimizer corresponding to the limit point $\lambda_*$ of the sequence $\{ \lambda_k \}$.
Consider a diminishing step size sequence
$\alpha_k = \frac{\gamma}{k}$ where $\gamma \geq \frac{1}{2c}$ is a positive constant. The sequence of iterates generated by Algorithm~\ref{alg:SMG} satisfies
\begin{equation*}
\mathbb{E}[\| x_{k}- x_*\|^2] \;\leq\; \frac{\max \{2\gamma^2\bar{M}^2(2c\gamma - 1)^{-1}, \|x_0 - x_*\|^2\}}{k},
\end{equation*}
and
\begin{equation*}
\begin{split}
\mathbb{E}[S(x_k, \lambda_*)] - \mathbb{E}[S(x_*, \lambda_*)] \;\leq\; \frac{(L/2)\max \{2\gamma^2\bar{M}(2c\gamma - 1)^{-1}, \|x_0 - x_*\|^2\}}{k}
\end{split}
\end{equation*}
where  $\bar{M} = L_g^2 + 2\Theta(L_{\nabla S} + \beta L_{\nabla S} M_{S} )$.
\end{theorem}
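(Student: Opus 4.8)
The plan is to follow the opening of the proof of Theorem~\ref{th:strongcov} essentially verbatim up to inequality~\eqref{proof_ineq_step1}, and then diverge by exploiting Assumption~\ref{ass::stronger_conv} in place of the plain strong-convexity inequality. Concretely, I would start from the non-expansiveness of the projection, expand $\mathbb{E}_{w_k}[\|x_{k+1}-x_*\|^2]$, add the same null term used there to recover the true multi-gradient $\nabla_x S(x_k,\lambda_k)$, and bound the two cross terms using $\|x_k-x_*\|\leq\Theta$ from~\eqref{compact_region_bound} together with the biasedness bound~\eqref{ineq:theorem_bound1}, the second-moment bound~\eqref{ineq:theorem_bound2}, and the subproblem-sensitivity bound~\eqref{subprob_lip_lambda2}. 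The crucial change is that the inner-product term $-2\alpha_k\nabla_x S(x_k,\lambda_k)^\top(x_k-x_*)$ is now controlled by~\eqref{ass:strong_cov2} (the consequence of Assumption~\ref{ass::stronger_conv}), i.e.\ $\nabla_x S(x_k,\lambda_k)^\top(x_k-x_*)\geq c\|x_k-x_*\|^2$, instead of leaving a residual weighted-function gap. This yields the clean one-step contraction
\begin{equation*}
\mathbb{E}_{w_k}[\|x_{k+1}-x_*\|^2]\;\leq\;(1-2c\alpha_k)\|x_k-x_*\|^2+\alpha_k^2\bar{M},
\end{equation*}
with $\bar{M}=L_g^2+2\Theta(L_{\nabla S}+\beta L_{\nabla S}M_S)$.

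Taking total expectation and writing $a_k=\mathbb{E}[\|x_k-x_*\|^2]$, I would substitute $\alpha_k=\gamma/k$ to obtain $a_{k+1}\leq(1-2c\gamma/k)\,a_k+\gamma^2\bar{M}/k^2$, and then prove $a_k\leq\nu/k$ by induction on $k$, where $\nu$ is the maximum appearing in the statement. The inductive step reduces to the elementary estimate $\frac{k-1}{k^2}\leq\frac{1}{k+1}$ (always valid, since $(k-1)(k+1)\leq k^2$) and absorbing the remaining $k^{-2}$ terms; this succeeds precisely when $2c\gamma-1>0$ (guaranteed by $\gamma\geq\frac{1}{2c}$) and $\nu\geq\gamma^2\bar{M}(2c\gamma-1)^{-1}$, which is exactly the first branch of the maximum (the stated constant being a harmless looser choice), while the $\|x_0-x_*\|^2$ branch supplies the base case. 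This delivers the first displayed bound $\mathbb{E}[\|x_k-x_*\|^2]\leq\nu/k$.

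For the second bound I would pass from iterates to function values via smoothness of the weighted function. Since $x_*$ is the unique minimizer of $S(\cdot,\lambda_*)$, Pareto stationarity~\eqref{pareto_stationarity2} gives $\nabla_x S(x_*,\lambda_*)=0$, so the $L$-Lipschitz gradient property~\eqref{ass1:Lipschitz} (equivalently the standard descent inequality) yields, for every realization, $S(x_k,\lambda_*)-S(x_*,\lambda_*)\leq\nabla_x S(x_*,\lambda_*)^\top(x_k-x_*)+\frac{L}{2}\|x_k-x_*\|^2=\frac{L}{2}\|x_k-x_*\|^2$. Taking expectations and inserting the first bound immediately produces the stated $\mathcal{O}(1/k)$ rate for the expected function-value gap.

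The main obstacle is conceptual rather than computational: it is the legitimacy of replacing the strong-convexity gap of Theorem~\ref{th:strongcov} by the inequality $\nabla_x S(x_k,\lambda_k)^\top(x_k-x_*)\geq c\|x_k-x_*\|^2$, which rests entirely on Assumption~\ref{ass::stronger_conv}. The subtle point is that this assumption is imposed at the as-yet-unknown limiting weight $\lambda_*$ only implicitly, through the running weights $\lambda_k$; its purpose is exactly to let the recursion close directly on $\|x_k-x_*\|^2$, sidestepping the averaged weights $\bar{\lambda}_k$ and the possibly non-positive left-hand side that weakened the earlier result. A secondary care point is the bookkeeping of the constant $\nu$ and the index shift so that the base case and the $\frac{k-1}{k^2}\leq\frac{1}{k+1}$ estimate remain consistent under $\gamma\geq\frac{1}{2c}$; this is routine but must be carried out coherently.
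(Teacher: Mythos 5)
Your proposal is correct and follows essentially the same route as the paper's own proof: the same one-step contraction $\mathbb{E}_{w_k}[\|x_{k+1}-x_*\|^2]\leq(1-2c\alpha_k)\|x_k-x_*\|^2+\alpha_k^2\bar{M}$ obtained by substituting~\eqref{ass:strong_cov2} for~\eqref{theorm:strongconv} in the derivation of Theorem~\ref{th:strongcov}, the same Nemirovski-style induction for the $1/k$ rate (which the paper merely cites while you spell out), and the same Lipschitz expansion around $x_*$ with $\nabla_x S(x_*,\lambda_*)=0$ for the function-value bound. The only caveat, shared with the paper itself (whose proof actually requires $\gamma>1/(2c)$ despite the statement's $\gamma\geq 1/(2c)$), is that the constant $(2c\gamma-1)^{-1}$ degenerates at equality.
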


\begin{proof}
Similarly to the proof of Theorem~\ref{th:strongcov}, from \eqref{proof_ineq_step0} to \eqref{proof_ineq_step1}, but using \eqref{ass:strong_cov2} instead of \eqref{theorm:strongconv}, one has
\begin{equation*}
    \mathbb{E}_{w_k}[\| x_{k+1} - x_*\|^2]  \;\leq\; (1 -  2\alpha_kc)\| x_{k}- x_*\|^2 + \alpha_k^2 (L_g^2 + 2\Theta(L_{\nabla S} + \beta L_{\nabla S} M_{S} )).
  \end{equation*}
Taking total expectation on both sides leads to
\begin{equation*}
\mathbb{E}[\| x_{k+1} - x_*\|^2]   \;\leq\;   (1 -  2\alpha_kc)\mathbb{E}[\| x_{k}- x_*\|^2] + \alpha_k^2 \bar{M}.
\end{equation*}
Using $\alpha_k = \gamma/k$ with $\gamma > 1/(2c)$ and
an induction argument (see~\cite[Eq.~(2.9) and (2.10)]{ANemirovski_2009}) would lead us to
\begin{equation*}
\mathbb{E}[\| x_{k}- x_*\|^2] \;\leq\; \frac{\max \{2\gamma^2\bar{M}(2c\gamma - 1)^{-1}, \|x_0 - x_*\|^2\}}{k}.
\end{equation*}
Finally, from an expansion using the Lipschitz continuity of $\nabla_x S(\cdot,\lambda_*)$ (see~\eqref{ass1:Lipschitz}), one can also derive a sublinear rate in terms of the optimality gap of the weighted function value
\begin{equation*}
\begin{split}
\mathbb{E}[S(x_k, \lambda_*)] - \mathbb{E}[S(x_*, \lambda_*)] \;\leq\; & \mathbb{E}[\nabla_x S(x_*, \lambda_*)]^\top(x_k - x_*) + \frac{L}{2} \mathbb{E}[\|x_k - x_*\|^2] \\
\;\leq\; & \frac{(L/2)\max \{2\gamma^2\bar{M}(2c\gamma - 1)^{-1}, \|x_0 - x_*\|^2\}}{k}.
\end{split}
\end{equation*}
\end{proof}

\subsection{The convex case}

In this section,  we relax the strong convexity assumption to convexity and derive a similar sublinear rate of $1/\sqrt{k}$ in terms of weighted function value. Similarly to the case $m=1$, we assume that the weighted functions attains a minimizer (which then also ensures that $\mathcal{P}$ is non empty).

\begin{assumption}
\label{ass:conv_and_obtainable}
All the objective functions~$f_i: \mathbb{R}^n \rightarrow \mathbb{R}$ are convex, $i=1,\ldots,m$. The convex function $S(\cdot, \lambda)$ attains a minimizer for any $\lambda \in \Delta^m$.
\end{assumption}

\begin{theorem} \label{th2:convex}
(\textbf{sublinear convergence rate under convexity}) Let Assumptions \ref{ass:Lipschitz}--\ref{ass:subprob_lip} and \ref{ass:conv_and_obtainable} hold and $x_*$ be any point in $\mathcal{X}$. Consider a diminishing step size sequence $\alpha_k = \textstyle \frac{\bar{\alpha}}{\sqrt{k}}$ where $\bar{\alpha}$ is any positive constant.
The sequence of iterates generated by Algorithm~\ref{alg:SMG} satisfies
\begin{equation*}
\label{theorem2_cov}
\min_{s = 1, \ldots, k} \mathbb{E}[S({{x}_s, \lambda}_s)] - \mathbb{E}[S(x_*, \bar{\lambda}_k)]
\;\leq\; \frac{\frac{\Theta^2 }{2\bar{\alpha}} + \bar{\alpha}(L^2_g + 2\Theta(L_{\nabla S}+  \beta L_{\nabla S}M_{S} ))}{\sqrt{k}},
\end{equation*}
 where $\bar{\lambda}_k = \textstyle \frac{1}{k}\sum_{s = 1}^k \lambda_s \in \Delta^m$.
\end{theorem}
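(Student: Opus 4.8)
The plan is to follow closely the skeleton of the proof of Theorem~\ref{th:strongcov}, replacing strong convexity by plain convexity and adjusting the summation to accommodate the $1/\sqrt{k}$ stepsize. First I would reproduce the one-step estimate obtained in~\eqref{proof_ineq_step0}--\eqref{proof_ineq_step1}, which relies only on the non-expansiveness of the projection and the null-term trick and is therefore unchanged in the convex case. Then, instead of the strongly convex inequality~\eqref{theorm:strongconv}, I would invoke mere convexity of $S(\cdot,\lambda_k)$ (Assumption~\ref{ass:conv_and_obtainable}) to write $\nabla_x S(x_k,\lambda_k)^\top(x_k - x_*) \geq S(x_k,\lambda_k) - S(x_*,\lambda_k)$, i.e.\ the same bound with $c=0$. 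Plugging in~\eqref{ineq:theorem_bound1}, \eqref{ineq:theorem_bound2}, \eqref{subprob_lip_lambda2}, together with $\|x_k - x_*\| \leq \Theta$ from~\eqref{compact_region_bound}, yields the one-step recurrence
\begin{equation*}
\mathbb{E}_{w_k}[\|x_{k+1}-x_*\|^2] \;\leq\; \|x_k - x_*\|^2 - 2\alpha_k\big(S(x_k,\lambda_k) - S(x_*,\lambda_k)\big) + \alpha_k^2 \bar{M},
\end{equation*}
with $\bar{M} = L_g^2 + 2\Theta(L_{\nabla S} + \beta L_{\nabla S}M_S)$.

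Next I would rearrange this into a bound on $S(x_s,\lambda_s) - S(x_*,\lambda_s)$, take total expectation, and divide by $2\alpha_s$, obtaining
\begin{equation*}
\mathbb{E}[S(x_s,\lambda_s) - S(x_*,\lambda_s)] \;\leq\; \frac{1}{2\alpha_s}\Big(\mathbb{E}[\|x_s - x_*\|^2] - \mathbb{E}[\|x_{s+1}-x_*\|^2]\Big) + \frac{\alpha_s \bar{M}}{2}.
\end{equation*}
Summing over $s = 1,\ldots,k$, the delicate term is $\sum_{s=1}^k \frac{1}{2\alpha_s}(\mathbb{E}[\|x_s - x_*\|^2] - \mathbb{E}[\|x_{s+1}-x_*\|^2])$, which, unlike in the strongly convex case, does not telescope cleanly because the weights $1/\alpha_s$ depend on $s$. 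I would handle this by summation by parts: since $\alpha_s = \bar{\alpha}/\sqrt{s}$ is decreasing, the coefficients $1/(2\alpha_s)$ are nondecreasing, so using $\mathbb{E}[\|x_s - x_*\|^2]\leq \Theta^2$ and discarding the nonnegative endpoint term gives $\sum_{s=1}^k \frac{1}{2\alpha_s}(\cdots) \leq \frac{\Theta^2}{2\alpha_k} = \frac{\Theta^2\sqrt{k}}{2\bar{\alpha}}$. For the noise term I would use $\sum_{s=1}^k \alpha_s = \bar{\alpha}\sum_{s=1}^k 1/\sqrt{s} \leq 2\bar{\alpha}\sqrt{k}$, so that $\sum_{s=1}^k \frac{\alpha_s\bar{M}}{2} \leq \bar{\alpha}\bar{M}\sqrt{k}$.

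Finally I would divide the summed inequality by $k$. On the left, $\min_{s}\mathbb{E}[S(x_s,\lambda_s)] \leq \frac{1}{k}\sum_{s=1}^k \mathbb{E}[S(x_s,\lambda_s)]$ bounds the minimum by the average, and since $S(x_*,\cdot)$ is linear in the weights, $\frac{1}{k}\sum_{s=1}^k \mathbb{E}[S(x_*,\lambda_s)] = \mathbb{E}[S(x_*,\bar{\lambda}_k)]$ with $\bar{\lambda}_k = \frac{1}{k}\sum_{s=1}^k \lambda_s \in \Delta^m$ (a convex combination of simplex points, hence again in the simplex); collecting the two pieces produces exactly the stated right-hand side. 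The main obstacle I anticipate is the summation-by-parts step: one must exploit monotonicity of the stepsize to collapse the weighted differences into the single term $\Theta^2/(2\alpha_k)$ and then balance it against the harmonic-type sum $\sum 1/\sqrt{s}$ so that both contributions scale as $1/\sqrt{k}$ after dividing by $k$; getting the constants right is where the bookkeeping lives, everything else being a direct transcription of the strongly convex argument with $c=0$.
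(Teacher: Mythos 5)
Your proposal is correct and follows essentially the same route as the paper's proof in Appendix~\ref{append_proof_conv}: the same one-step recurrence with convexity replacing strong convexity, the same Abel-type handling of the weighted telescoping sum via monotonicity of $1/\alpha_s$ and the bound $\mathbb{E}[\|x_s-x_*\|^2]\leq\Theta^2$ yielding $\Theta^2/(2\alpha_k)$, the same bound $\sum_{s=1}^k 1/\sqrt{s}\leq 2\sqrt{k}$, and the same min-versus-average plus linearity-in-$\lambda$ step at the end. The constants you obtain match the stated theorem exactly.
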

\begin{proof}
See Appendix~\ref{append_proof_conv}.
\end{proof}

Similar comments and analysis as in the strongly convex (see the last part of Subsection~\ref{subsec:SC} after the proof of Theorem~\ref{th:strongcov}) could be here derived for the convex case. Note that if we let $\bar{\alpha} = c/2$, where $c$ is the strongly convex constant in Section 5.1, the constant part in the above theorem is larger by $\frac{\Theta^2}{2\bar{\alpha}}$ than in Theorem~\ref{th:strongcov}. Also, when comparing to the strongly convex case, not only the rate is worse in the convex case (as happens also when $m=1$) but also $\bar{\lambda}_k$ is now converging slower to~$\lambda_*$.

% When comparing to the strongly convex case, we point out that not only the rate is worse in the convex case (as happens also when $m=1$) but also $\bar{\lambda}_k$ is now converging slower to~$\lambda_*$.

\subsection{Imposing a bound on the biasedness of the multi-gradient}
\label{Unbiasedness_verification}

Recall that from
\begin{equation}
\label{bias_verify_eq1}
\left\|\mathbb{E}_w\left [g(x, w) - \nabla_x S(x, \lambda^g)\right] \right\|
\;\leq\;  \sum_{i = 1}^m\mathbb{E}_{w}\left [\left \|g_i(x, w) - \nabla f_i(x) \right\|\right ],
\end{equation}
where $g_i(x, w)$ is the stochastic gradient at $x$ for the $i$-th objective function, and from Assumption~\ref{ass:unbiasedness}~(b), we derived a more general bound for the biasedness of the stochastic multi-gradient in~\eqref{ass:sing_grad_diff_bound}, whose right-hand side involves the stepsize~$\alpha_k$.  For simplicity, we will again omit the index~$k$ in the subsequent analysis.

 We will see that the right-hand side of~\eqref{bias_verify_eq1} can be always
 (approximately) bounded by a dynamic sampling strategy when calculating the stochastic gradients for each objective function. The idea is similar to mini-batch stochastic gradient, in the sense that by increasing the batch size the noise is reduced and thus more accurate gradient estimates are obtained.

Assumption~\ref{ass:unbiasedness} (a) states that $g_i(x, w), i = 1, \ldots, m$, are unbiased estimates of the corresponding true gradients. Let us assume that $g_i(x, w)$ is normally distributed with mean $\nabla f_i(x)$ and variance $\sigma^2_i$, i.e., $g_i(x, w) \sim \mathcal{N}(\nabla f_i(x), \sigma^2_iI_n)$, where $n$ is the dimension of $x$. For each objective function, one can obtain a more accurate stochastic gradient estimate by increasing the batch size. Let $b_i$ be the batch size for the $i$-th objective function and $ \bar{g}_i(x, w) = \frac{1}{b_i}\sum_{r = 1}^{b_i} g_i(x, w^r)$ be the corresponding batch stochastic gradient, where $\{w^r\}_{1\leq r\leq b_i}$ are realizations of $w$. Then, $G_i = g_i(x, w) - \nabla f_i(x)$ and $\bar{G}_i = \bar{g}_i(x, w) - \nabla f_i(x) , i = 1, \ldots, m$, are all random variables of mean~0. The relationship between ${G}_i$ and $\bar{G}_i$ is captured by (see~\cite{JEFreund_1962})
\begin{equation*}
\mathbb{V}_{w}[\bar{G}_i] \;\leq\; \frac{\mathbb{V}_{w}[{G}_i]}{b_i} \;\leq\; \frac{\sigma_i^2}{b_i}.
\end{equation*}
By the definition of variance $\mathbb{V}_{w}[G_i] = \mathbb{E}_{w}[\|G_i\|^2] - \|\mathbb{E}_{w}[G_i]\|^2$, $\|\mathbb{E}_{w}[\bar{G}_i]\| \leq \mathbb{E}_{w}[\|\bar{G}_i\|]$, and $\mathbb{E}_{w}[\bar{G}_i] = 0$, one has $\mathbb{V}_{w}[\|\bar{G}_i\|] \leq \mathbb{V}_{w}[\bar{G}_i] \leq \sigma_i^2/b_i$.
Then, replacing $g_i(x, w)$ in \eqref{bias_verify_eq1} by $\bar{g}_i(x, w)$, we have
\begin{equation*}
    \left\|\mathbb{E}_{w}\left [\bar{g}(x, w) - \nabla_x S(x, \lambda^g)\right] \right\| \;\leq\; ~ \sum_{i = 1}^m\mathbb{E}_{w} \left[  \|\bar{G}_i\|\right] \;\leq\; \sum_{i = 1}^m \frac{\sigma_i \sqrt{n}}{\sqrt{b_i}},
\end{equation*}
where the last inequality results from $\mathbb{E}[\|X\|] \leq \sigma \sqrt{n}$ for the random variable $X \sim \mathcal{N}(0, \sigma^2 I_n)$~\cite{VChandrasekaran_etal_2012}. Hence, one could enforce an inequality of the form $\sum_{i = 1}^m \frac{\sigma_i \sqrt{n}}{\sqrt{b_i}} \leq \alpha_k(M_1 + M_F\sum_{i=1}^m \|\nabla f_i(x)\|)$ to guarantee that~\eqref{ass:sing_grad_diff_bound} holds (of course replacing the size of the true gradients by some positive constant). Furthermore, to guarantee that the stronger bound~\eqref{ass2::bounded_diff_gi} holds, one can require $\mathbb{E}_{w}[\|\bar{G}_i\|] \leq \frac{\sigma_i\sqrt{n}}{\sqrt{b_i}} \leq \alpha(C_1 + \hat{C}_i\|\nabla f_i(x)\|)$ for each objective function. Intuitively, when smaller stepsizes are taken, the sample sizes $\{b_i\}_{1\leq i \leq m}$ should be increased or, correspondingly, smaller sample variances $\{\sigma_i\}_{1\leq i \leq m}$ are required.

\section{Pareto-front stochastic multi-gradient method}
\label{section_psmg_pmg}

The practical goal in many MOO problems is to calculate a good approximation of part of (or the entire) Pareto front, and for this purpose the SMG algorithm is insufficient as running it only yields a single Pareto stationary point. We will thus design a Pareto-Front Stochastic Multi-Gradient (PF-SMG) algorithm to obtain the complete Pareto front. The key idea of such an algorithm is to iteratively update a list of nondominated points which will render increasingly better approximations to the true Pareto front. The list is updated by essentially applying the SMG algorithm at some or all of its current points.
The PF-SMG algorithm begins with a list of (possibly random) starting points $\mathcal{L}_0$. At each iteration, before applying SMG and for sake of better performance, we first add to the list a certain number of perturbed points around each of the current ones. Then we apply a certain number of SMG steps multiple times at each point in the current list, adding each resulting final point to the list. Note that by applying SMG multiple times starting from the very same point, one obtains different final points due to stochasticity. The iteration is finished by removing all dominated points from the list. Since the new list ${\mathcal{L}}_{k+1}$ is obtained by removing dominated points from ${\mathcal{L}}_{k} \cup \mathcal{L}_k^{\text{new}}$, where $\mathcal{L}_k^{\text{new}}$ is the set of new points added to the current nondominated list~${\mathcal{L}}_{k}$, we only need to compare each point in $\mathcal{L}^{\text{new}}_k$ to the other points in ${\mathcal{L}}_{k} \cup \mathcal{L}^{\text{new}}_k$ in order to remove any dominated points~\cite{ALCustodio_etal_2011}.
% Specifically, for each point, one has to exhaust the list to see whether there exists another point dominating it.
The PF-SMG algorithm is formally described as follows.

{\linespread{1}\addtocounter{algorithm}{-1}
\renewcommand{\thealgorithm}{{2}} %\tt
\begin{algorithm}[H] % enter the algorithm environment \small
\caption{Pareto-Front Stochastic Multi-Gradient (PF-SMG) Algorithm}
\label{alg:PF_SMG}
\begin{algorithmic}[1] % enter the algorithmic environment
\par\vspace*{0.1cm}
\item
Generate a list of starting points $\mathcal{L}_0$. Select parameters $r, p, q \in \mathbb{N}$.
\item {\bf for} $k=0,1, \ldots$ {\bf do}
\item \quad\quad Set $\mathcal{L}_{k+1} = \mathcal{L}_k$.
\item \quad\quad {\bf for} each point $x$ in the list $\mathcal{L}_{k+1}$ {\bf do}
% \item \quad\quad\quad\quad {\bf for} $t = 1, \ldots, r$ {\bf do}
\item \quad\quad\quad\quad Add $r$ perturbed points to the list $\mathcal{L}_{k+1}$ from a neighborhood of $x$.
% Add the new point $x + w^{t}$ to the list $\mathcal{L}_{k+1}$ where $w^{t}$ is a realization of $w_k$.
% \item \quad\quad\quad\quad{\bf end for}
\item \quad\quad{\bf end for}
\item \quad\quad {\bf for} each point $x$ in the list $\mathcal{L}_{k+1}$ {\bf do}
\item \quad\quad\quad\quad {\bf for} $t = 1, \ldots, p$ {\bf do}
\item \quad \quad\quad\quad\quad Apply $q$ iterations of the SMG algorithm starting from~$x$.
 \item \quad \quad\quad\quad\quad Add the final output point $x_t$ to the list $\mathcal{L}_{k+1}$.
\item \quad \quad\quad\quad{\bf end for}
 \item \quad\quad {\bf end for}
\item \quad\quad Remove all the dominated points from $\mathcal{L}_{k+1}$:
{\bf for} each point $x$ in the list $\mathcal{L}_{k+1}$ {\bf do}
\item \quad\quad \quad  If $\exists~ y \in \mathcal{L}_{k+1}$ such that $F(y) < F(x)$ holds, remove $x$ from the list.
{\bf end for}
\item {\bf end for}
\par\vspace*{0.1cm}
\end{algorithmic}
\end{algorithm}
}

In order to evaluate the performance of the PF-SMG algorithm and have a good benchmark for comparison, we also introduce a Pareto-front version of the deterministic multi-gradient algorithm (acronym PF-MG). The PF-MG algorithm is exactly the same as the PF-SMG one except that one applies $q$ steps of  multi-gradient descent instead of stochastic multi-gradient to each point in Line~9. Also, $p$~is always equal to one in PF-MG.

\section{Numerical experiments}
\label{numerical_result}

\subsection{Parameter settings and metrics for comparison}

In our implementation, both PF-SMG and PF-MG algorithms use the same $30$ randomly generated starting points, i.e., $|\mathcal{L}_0| = 30$. In both cases we set $q = 2$. The step size is initialized differently according to the problem but always halved every $200$ iterations. Both algorithms are terminated when either the number of iterations exceeds~$1000$ or the number of points in the iterate list reaches~$1500$.

To avoid the size of the list growing too fast, we only generate the~$r$ perturbed points for pairs of points corresponding to the $m$ largest holes along the axes $f_i$,  $i = 1, \ldots, m$. More specifically, given the current list of nondominated points, their function values in terms of $f_i$ are first sorted in an increasing order, for $i = 1, \ldots, m$. Let $d_i^{j,j+1}$ be the distance between points $j$ and $j+1$ in $f_i$. Then, the pair of points corresponding to the largest hole along the axis $f_i$ is $(j_i, j_i+1)$, where $j_i = \argmax_{j} d_i^{j, j+1}$.

Given the fact that applying the SMG algorithm multiple times to the same point results in different output points, whereas this is not the case for multi-gradient descent, we take $p = 2$ for PF-SMG but let $p = 1$ for PF-MG. Then, we choose $r = 5$ for PF-SMG and $r = 10$ for PF-MG, such that the number of new points added to the list is the same at each iteration of the two algorithms.

To analyze the numerical results, we consider two types of widely-used metrics
to measure and compare Pareto fronts obtained from different algorithms, Purity~\cite{SBandyopadhya_SKPal_BAruna_2004} and Spread~\cite{KDeb_etal_2002}, whose mathematical formula are briefly recalled in Appendix~\ref{append_metrics_comp}. In what concerns the Spread metric, we use two variants, maximum size of the holes and point spread, respectively denoted by~$\Gamma$ and~$\Delta$.

\subsection{Supervised machine learning (logistic regression)}
\subsubsection{Problem description}

Fairness in machine learning has been extensively studied in the past few years~\cite{SBarocas_MHardt_ANarayanan_2017, MHardt_EPrice_NSrebro_2016, BWoodworth_etal_2017, MBZafar_etal_2017b, RZemel_etal_2013}. Given that many real-world prediction tasks such as credit scoring, hiring decisions, and criminal justice are naturally formulated as binary classification problems, we hence focus on the application of multi-objective optimization in fair binary classification. Generally, a classifier is said to be unfair if its prediction outcomes discriminate against people with sensitive attributes, such as gender and race. There are more than $20$ fairness measures proposed in the literature~\cite{SBarocas_MHardt_ANarayanan_2017, SVerma_JRubin_2018}, among which we propose to handle in this section \textit{overall accuracy equality} by a multi-objective formulation. By definition, a classifier satisfies overall accuracy equality if different subgroups have equal prediction accuracy~\cite{RBerk_etal_2018}. Our key idea is to construct a multi-objective problem where each objective corresponds to the prediction error of each subgroup. The resulting Pareto fronts help us identifying the existence of bias or unfairness and support fair decision-making. A more detailed study of the accuracy fairness trade-offs in machine learning using stochastic multi-objective optimization is given in~\cite{SLiu_LNVicente_2020}.

% The idea to construct a multi-objective problem for binary classification problems is inspired by the existence of bias in real data sets, as data instances may be collected for the same classification problem but actually from distinct distributions. Some related issues, like the fairness concern, were addressed in~\cite{YBechavod_etal_2019}. In fact, if one has a data set collected from different sources or groups, it is necessary to do classification separately such that the accuracy is higher for all groups. However, sometimes we may collect data from distinct groups but cannot determine the existence of bias. Designing a multi-objective formulation might help us identifying if there exists bias and define the best trade off if it does exist.

We have tested our idea on classical binary classification problems with the training data sets selected from LIBSVM~\cite{CCChang_CJLin_2011}. Each data set consists of feature vectors and labels for a number of data instances. The goal of binary classification is to fit the best prediction hyperplane in order to well classify the set of data instances into two groups. More precisely, for a given pair of feature vector $a$ and label $y$, we consider a separating hyperplane $x^\top a + b$ such that
\[\left\{
\begin{array}{ll}
    x^\top a + b  \;\geq\; 0 & \text{when }y \;=\;1, \\ \vspace{0.5ex}
     x^\top a + b  \;< \; 0 & \text{when }y \;=\;-1.\\
\end{array}
\right. \]
In our context, we evaluate the prediction loss using the smooth convex logistic function \linebreak $l(a, y; x, b) = \text{log}(1+\text{exp}(-y(x^\top a + b)))$, which leads us to a well-studied convex objective, i.e., logistic regression problem with the objective function being $\min_{x, b} \frac{1}{N}\sum_{j=1}^N \text{log}(1+\text{exp}(-y_j(x^\top a_j + b)))$, where $N$ is the size of training data. To avoid over-fitting, we need to add a regularization term $\frac{\lambda}{2}\|x\|^2$ to the objective function.

For the purpose of our study, we pick a feature of binary values and separate the given data set into two groups, with $J_1$ and $J_2$ as their index sets. An appropriate two-objective problem is formulated as $\min_{x, b}~(f_1(x, b), f_2(x, b))$, where
\begin{equation}
    f_i(x, b) = \frac{1}{|J_i|}\sum_{j \in J_i} \text{log}(1+e^{(-y_j(x^\top a_j + b))}) + \frac{\lambda_i}{2}\|x\|^2.
    \label{two_obj_LogRe}
\end{equation}

\subsubsection{Numerical results}

Our numerical results are constructed for four data sets: \textit{heart}, \textit{australian}, \textit{svmguide3}, and \textit{german.numer} \cite{CCChang_CJLin_2011}. First of all, we ran the single stochastic gradient (SG) algorithm with maximum $1000$ iterations to obtain a minimizer for the entire group, i.e., $J_1 \cup J_2$, based on which the classification accuracies for the whole group and for two groups separately are calculated. See Table~\ref{SGD_accuracy1} for a summary of the results.  ``Split'' indicates which feature is selected to split the whole group into two distinct groups.
The initial step size is also listed in the table.

\begin{table}[H]
\centering
\begin{tabular}{l|c|c|c|c|c|c}
\cline{1-7}
Data & $N$ & Step size & Split & Group 1 & Group 2 & Entire group \\ \cline{1-7}
\textit{heart}  & 270 & 0.1  & 2 & 93.1\%  & 80.3\%  & 84.4\%       \\ \cline{1-7}
\textit{australian} & 690 & 0.1 & 1 & 89.2\% & 85.9\%   & 87\%         \\ \cline{1-7}
\textit{svmguide3} & 1243 & 0.2 & 10 &  79.4\%   & 23.0\%   & 76.6\%       \\ \cline{1-7}
\textit{german.numer} & 1000 & 0.2 & 24 & 73.0\% & 79.4\% & 77\% \\ \cline{1-7}
\end{tabular}
\caption{Classification accuracy of the single SG.\label{SGD_accuracy1}}
\end{table}

One can easily observe that there exist obvious differences in term of the training accuracy between the two groups of data sets \textit{heart} and \textit{svmguide3}, whereas \textit{australian} and \textit{german.numer} have much smaller gaps. This means that classifying a new instance using the minimizer obtained from the single SG for the whole group might lead to large bias and poor accuracy. We then constructed a two-objective problem \eqref{two_obj_LogRe} with $\lambda_1 = \lambda_2 = 10^{-3}$ for the two groups of each data set. The PF-SMG algorithm has yielded the four Pareto fronts displayed in Figure~\ref{Pareto_fronts_LogRe}.
\begin{figure}[tb]
   \centering
   \subfloat[][\textit{heart}]{\includegraphics[width=.48\textwidth, height = 4.8 cm]{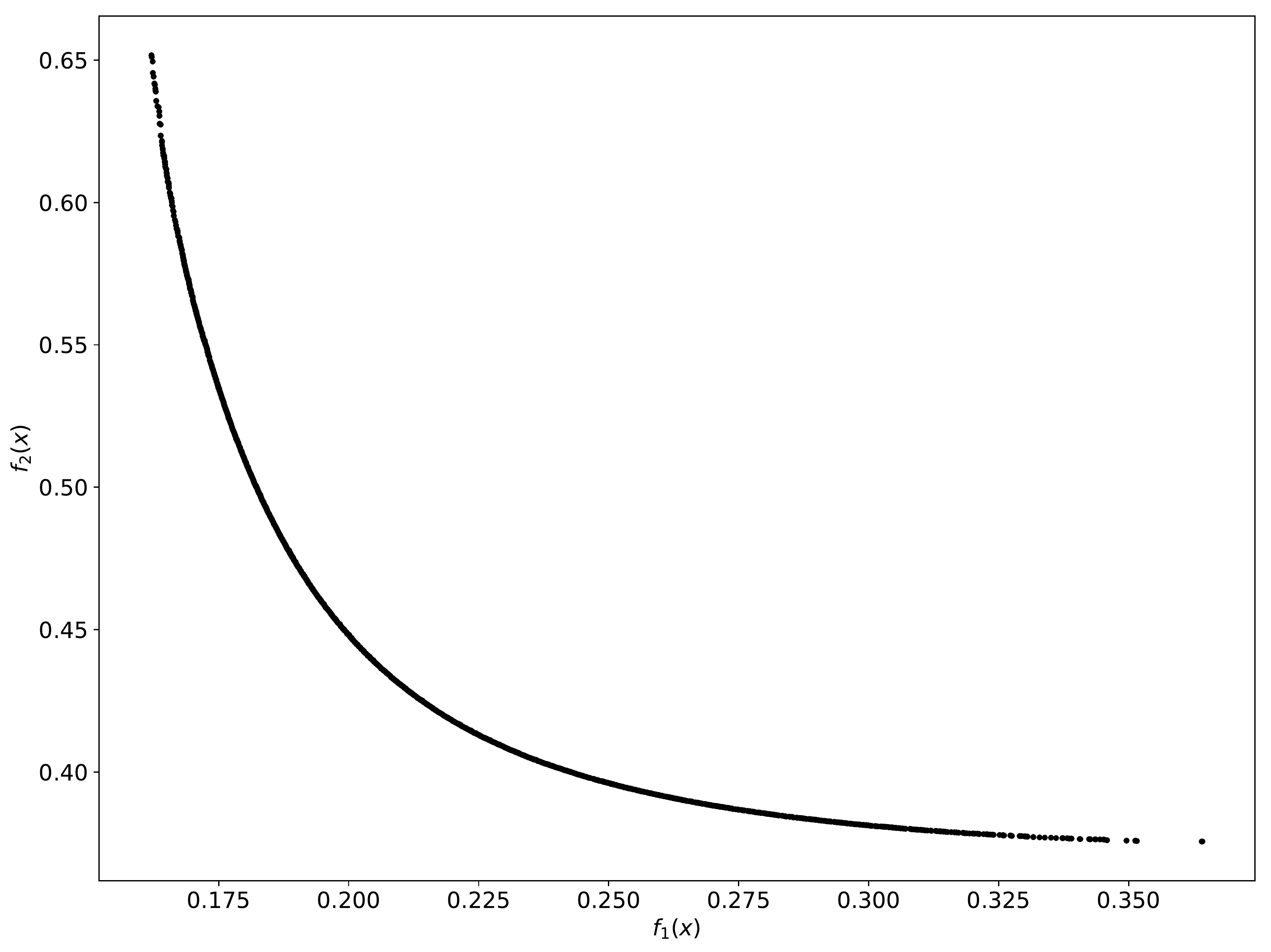}}
   \subfloat[][\textit{australian}]{\includegraphics[width=.48\textwidth, height = 4.8 cm]{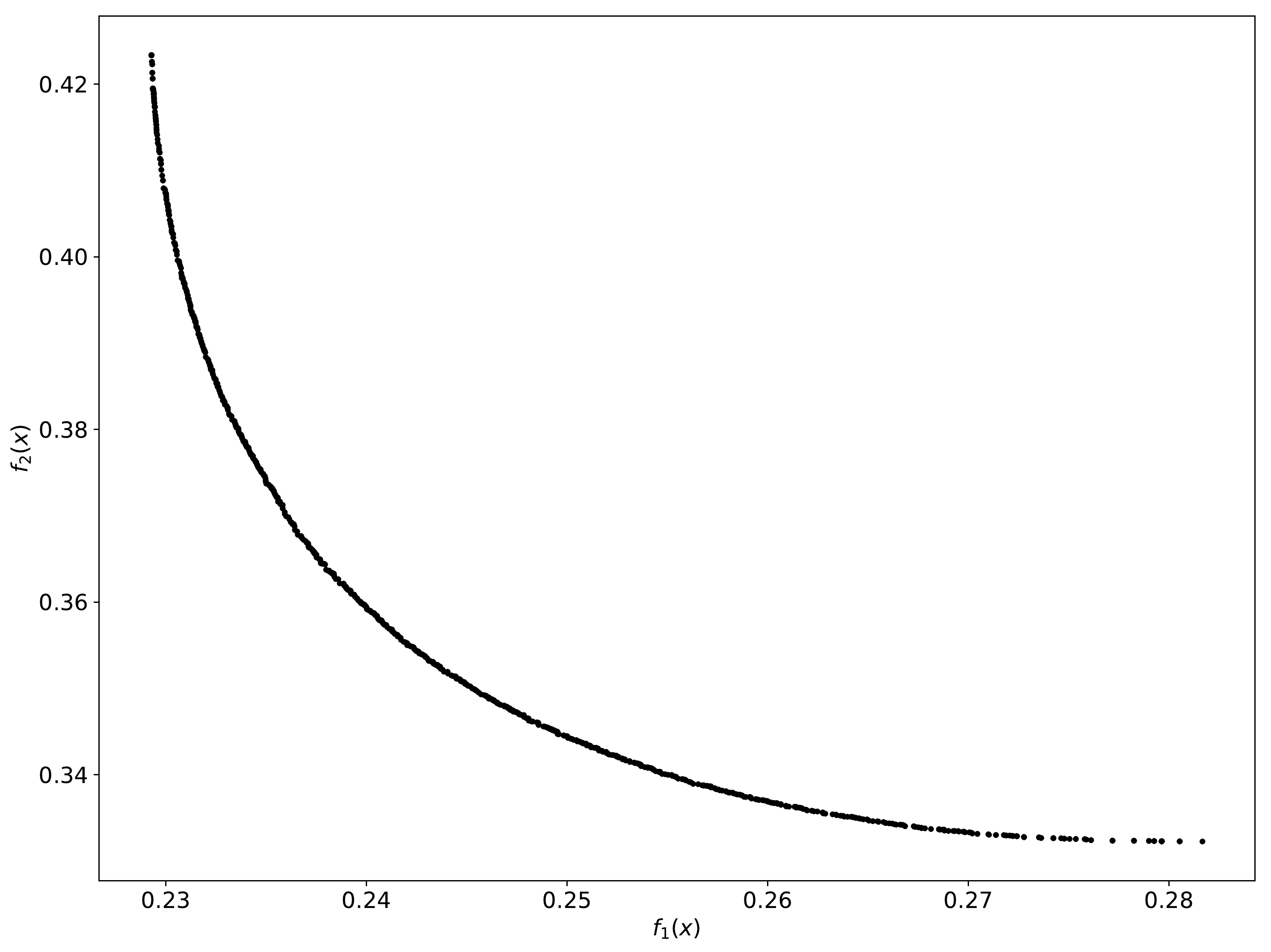}} \\
   \subfloat[][\textit{svmguide3}]{\includegraphics[width=.48\textwidth, height = 4.8 cm]{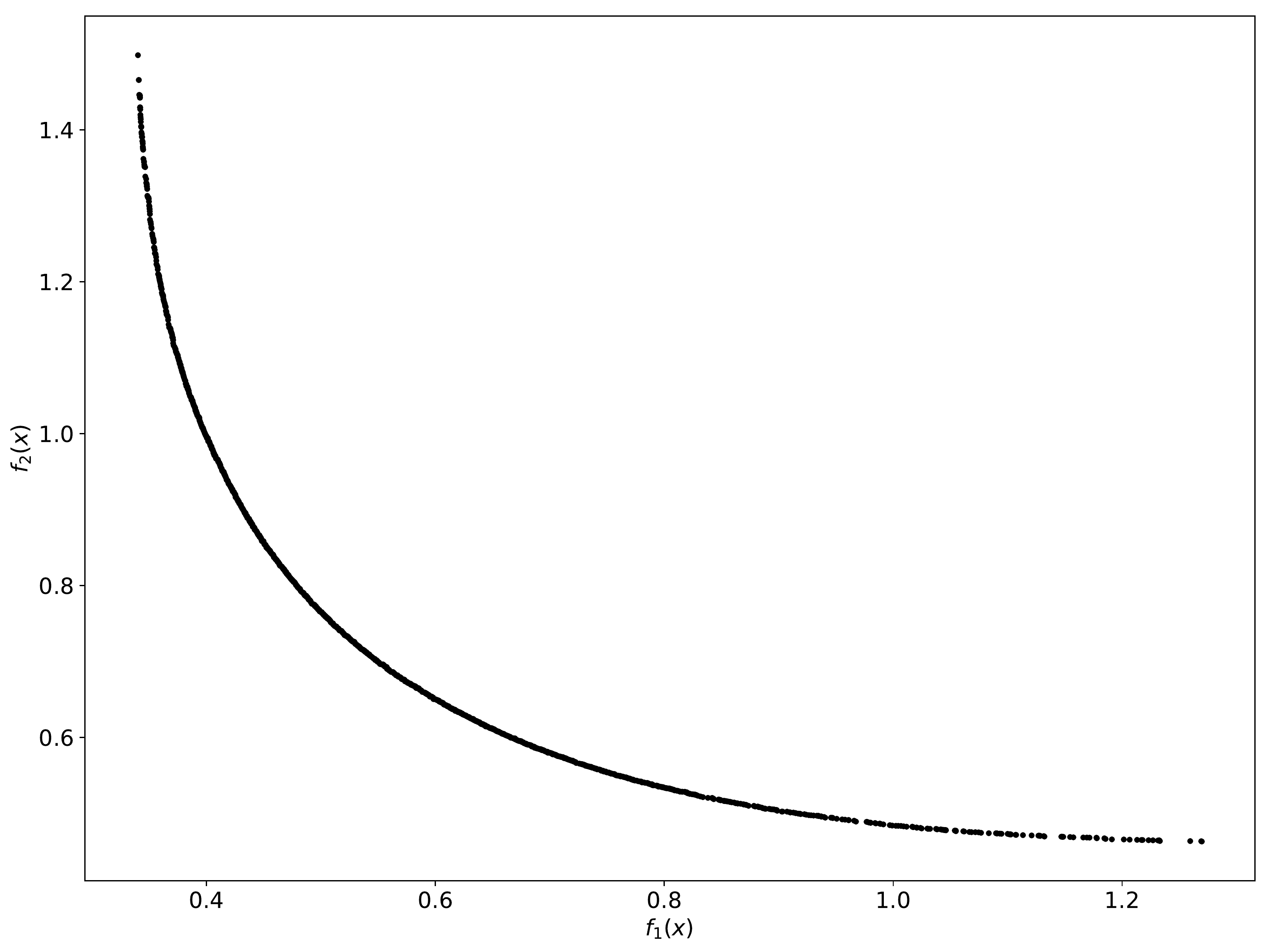}}
   \subfloat[][\textit{german.numer}]{\includegraphics[width=.48\textwidth, height = 4.8 cm]{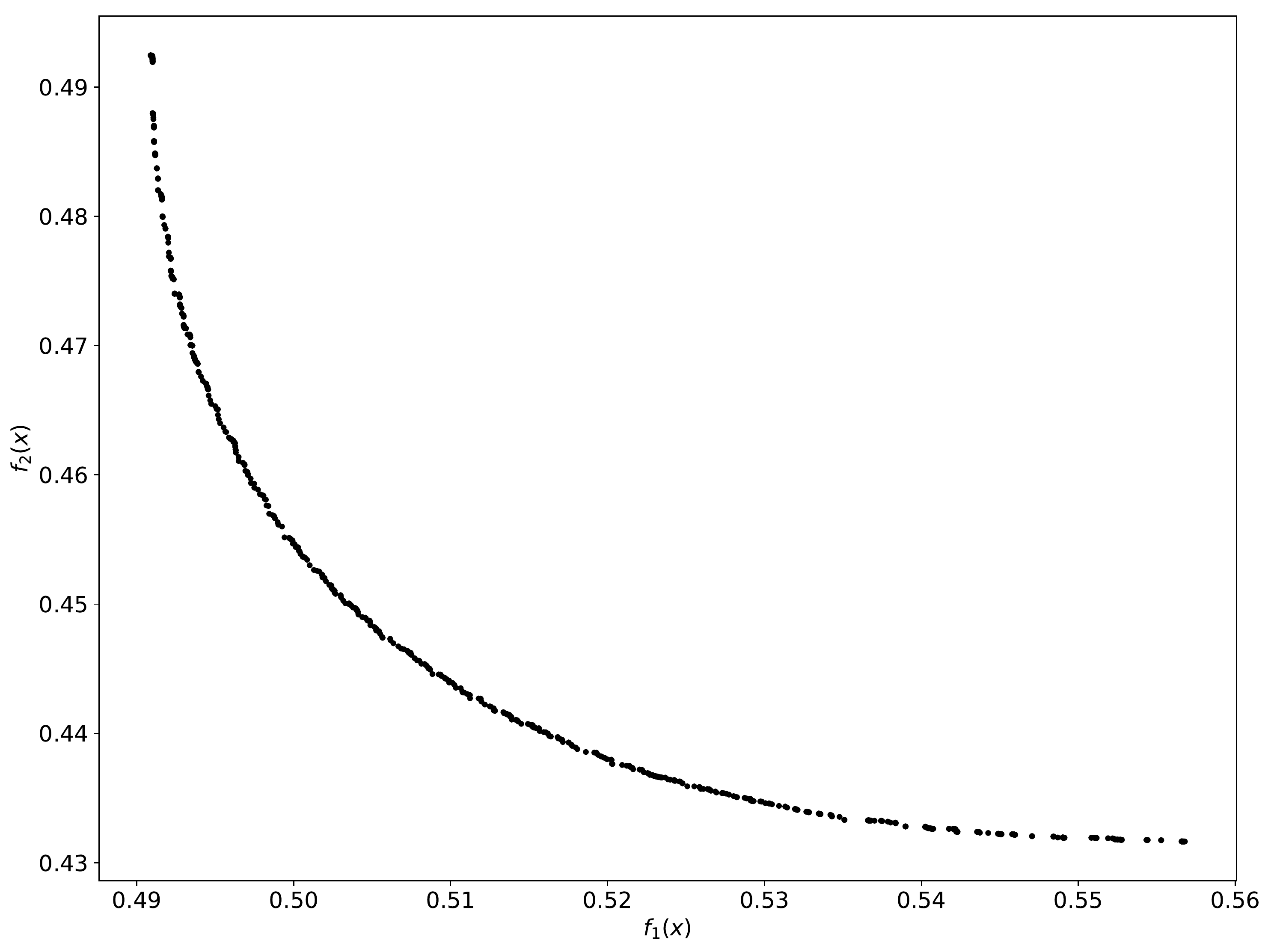}}
   \caption{The approximated Pareto fronts for the logistic regression problems: (a) \textit{heart}; (b) \textit{australian}; (c) \textit{svmguide3}; (d) \textit{german.numer.}\label{Pareto_fronts_LogRe}}
\end{figure}

The wider Pareto fronts of data sets \textit{heart} and \textit{svmguide3} coherently indicate higher distinction between their two groups. Table~\ref{PFSMG-LogRe-output-Table} presents the number of iterations and the size of Pareto front solutions when the PF-SMG algorithm is terminated. To illustrate the trade-offs, five representative points are selected from the obtained Pareto front, and the corresponding training accuracy are evaluated for the two groups separately. The CPU times for computing these Pareto fronts using PF-SMG are also included in the table (all the experiments were ran on a MacBook Pro 2019 with 2.4 GHz Quad-Core Intel Core i5). For comparison, we ran the PF-MG algorithm using full batch gradients at each iteration. Since the computation of full batch gradients is more expensive, the CPU times of PF-MG to obtain approximately the same number of non-dominated points are $0.94$, $2.5$, $1.3$, and $2.5$ times those of PF-SMG, respectively. For the dataset \textit{heart}, PF-MG takes slightly less time due to a smaller data size (using more accurate gradient results in higher convergence speed and more accurate Pareto fronts). A more systematic comparison between PF-MG and PF-SMG in terms of spread and purity is given in Section~\ref{synthetic_test}.
%Despite no algorithm comparison here, we also calculated the maximum size of the holes $\Gamma$ and the point spread $\Delta$ for these Pareto fronts.  The number in parentheses is the CPU time when using full batch data instances to compute true gradients at each iteration. () (745.4) (135.9) (677.93)

\begin{table}[H]
\centering
\begin{tabular}{l|c|c|c|c|c|c|c|c|c}
\toprule
          Data & \#Iter & $|\mathcal{L}_k|$ & $N$ & $P_1$ & $P_2$        & $ P_3$   & $P_4$  & $P_5$ & CPU Time \\ \midrule
\multirow{2}{*}{\textit{heart}} &\multirow{2}{*}{539}  & \multirow{2}{*}{1516} & 183 & 79.2\% & 81.4\%  & 82.0\%  & 83.1\%  & 83.6\% & \multirow{2}{*}{69.5}   \\ %\cline{2-7}
& & & 87 & 94.3\% & 90.8\% & 89.7\% & 86.2\% & 85.1\% &  \\ \hline

\multirow{2}{*}{\textit{australian}} & \multirow{2}{*}{1000}  & \multirow{2}{*}{1056}  & 468 & 84.8\% & 85.5\% & 86.1\% & 86.6\% & 86.8\% & \multirow{2}{*}{293.5} \\
& & & 222 & 91.9\% & 91.4\% & 91.0\% & 90.5\%  & 89.6\% & \\
\hline

\multirow{2}{*}{\textit{svmguide3}} & \multirow{2}{*}{161} & \multirow{2}{*}{1575} & 1182 & 80.6\% & 75.5\% & 64.5\% & 55.4\% & 32.9\% & \multirow{2}{*}{100.9} \\
& & & 61 & 29.5\% & 41.0\% & 55.7\% & 72.1\% & 85.2\% &
\\ \hline

\multirow{2}{*}{\textit{german.numer}} & \multirow{2}{*}{566} &
\multirow{2}{*}{1516}& 630 & 76.6\% & 77.1\% & 76.2\% & 75.1\% & 73.8\%  & \multirow{2}{*}{269.2} \\
& & & 370 & 78.4\% & 77.8\% & 79.0\% & 80.2\% & 80.6\% &  \\
\bottomrule
\end{tabular}
\caption{Classification accuracy corresponding to several Pareto minimizers.\label{PFSMG-LogRe-output-Table}}
\end{table}

% \begin{table}[H]
% \centering
% \begin{tabular}{l|c|c|c|c|c|c|c|c|c|c}
% \toprule
%           Data & \#Iter & $|\mathcal{L}_k|$ & $N$ & $P_1$ & $P_2$        & $ P_3$   & $P_4$  & $P_5$ & $\Gamma$ & $\Delta$  \\ \midrule
% \multirow{2}{*}{\textit{heart}} &\multirow{2}{*}{539}  & \multirow{2}{*}{1516} & 183 & 79.2\% & 81.4\%  & 82.0\%  & 83.1\%  & 83.6\% & \multirow{2}{*}{0.0015} &  \multirow{2}{*}{0.8974}  \\ %\cline{2-7}
% & & & 87 & 94.3\% & 90.8\% & 89.7\% & 86.2\% & 85.1\% & & \\ \hline

% \multirow{2}{*}{\textit{australian}} & \multirow{2}{*}{1000}  & \multirow{2}{*}{1056}  & 468 & 84.8\% & 85.5\% & 86.1\% & 86.6\% & 86.8\% & \multirow{2}{*}{0.0025} & \multirow{2}{*}{0.9106} \\
% & & & 222 & 91.9\% & 91.4\% & 91.0\% & 90.5\%  & 89.6\% & & \\
% \hline

% \multirow{2}{*}{\textit{svmguide3}} & \multirow{2}{*}{161} & \multirow{2}{*}{1575} & 1182 & 80.6\% & 75.5\% & 64.5\% & 55.4\% & 32.9\% & \multirow{2}{*}{0.0191}  & \multirow{2}{*}{0.9905} \\
% & & & 61 & 29.5\% & 41.0\% & 55.7\% & 72.1\% & 85.2\% &  &
% \\ \hline

% \multirow{2}{*}{\textit{german.numer}} & \multirow{2}{*}{566} &
% \multirow{2}{*}{1516}& 630 & 76.6\% & 77.1\% & 76.2\% & 75.1\% & 73.8\%  & \multirow{2}{*}{0.0025}  & \multirow{2}{*}{0.8389} \\
% & & & 370 & 78.4\% & 77.8\% & 79.0\% & 80.2\% & 80.6\% & & \\
% \bottomrule
% \end{tabular}
% \caption{Classification accuracy corresponding to several Pareto minimizers.}
% \label{PFSMG_LogRe_output}
% \end{table}

It is observed for the groups of data sets \textit{heart} and \textit{svmguide3} that the differences of training accuracy vary more than $10\%$ among Pareto minimizers. Two important implications from the results are: (1) Given several groups of data instances for the same problem, one can evaluate their biases by observing the range of an approximated Pareto front;  (2) The resulting well-approximated Pareto front provides the decision-maker a complete trade-off of prediction accuracy across different groups. Recall that by the definition of overall accuracy equality, the larger the accuracy disparity among different groups, the higher is the bias or unfairness. To achieve a certain level of fairness in predicting new data instances, one may select a nondominated solution with a certain amount of accuracy disparity.

\subsection{Synthetic MOO test problems}
\label{synthetic_test}
\subsubsection{Test problems}

There exist more than a hundred of deterministic MOO problems reported in the literature (involving simple bound constraints), and they were collected in~\cite{ALCustodio_etal_2011}. Our testing MOO problems are taken from this collection (see~\cite{ALCustodio_etal_2011} for the problem sources) and include four cases: convex, concave, mixed (neither convex nor concave), and disconnected Pareto fronts. Table~\ref{MOO_test_prob1} provides relevant information including number of objectives, variable dimensions, simple bounds, and geometry types of Pareto fronts for the~13 selected MOO problems.

\begin{table}[H]
    \centering
    \begin{tabular}{c|c|c|c|c}
    \toprule
      Problem  & $n$ & $m$ & Simple bounds & Geometry \\ \hline
    ZDT1 & 30 & 2 & $[0, 1]$ & convex\\ \hline
     ZDT2 & 30 & 2 & $[0, 1]$ & concave\\\hline
      ZDT3  & 30 & 2 & $[0, 1]$ & disconnected\\\hline
      JOS2 & 10 & 2 & $[0, 1]$ & mixed \\ \hline
      SP1 & 2 & 2 & No & convex \\ \hline
     IM1 & 2 & 2 & $[1, 4]$
    % \begin{tabular}{c}
    % $x_1 \in [1, 4]$
    % \\ $x_2 \in [1, 2]$
    % \end{tabular}
    & concave \\\hline
     FF1 & 2 & 2 & No & concave \\ \hline
     Far1 & 2 & 2 & $[-1, 1]$ & mixed \\\hline
     SK1 & 1 & 2 & No & disconnected \\ \hline
     MOP1 & 1 & 2 & No & convex \\\hline
     MOP2 & 15 & 2 & [-4, 4] & concave \\\hline
     MOP3 & 2 & 2 & $[-\pi, \pi]$ & disconnected \\ \hline
     DEB41 & 2 & 2 & $[0, 1]$ & convex \\
    \bottomrule
    \end{tabular}
    \caption{13 MOO testing problems.}
    \label{MOO_test_prob1}
\end{table}

The way to construct a corresponding stochastic MOO problem from its deterministic MOO problem was the same as in~\cite{MQuentin_PFabrice_JADesideri_2018}. For each of these MOO test problems, we added random noise to its variables to obtain a stochastic MOO problem, i.e.,
\begin{equation*}
\begin{array}{rl}
     \min & F(x) = (\mathbb{E}[f_1(x + w)], \ldots, \mathbb{E}[f_m(x + w)])^\top \\ [0.5ex]
     \mbox{s.t.} & x \in \mathcal{X},
\end{array}
\end{equation*}
where $w$ is uniformly distributed with mean zero and interval length being $1/10$ of the length of the simple bound interval (the latter one was artificially chosen when not given in the problem description).
Note that the stochastic gradients will not be unbiased estimates of the true gradients of each objective function, but rather gradients of randomly perturbed points in the neighborhood of the current point.

Figure~\ref{geometry_of_Pareto_fronts} illustrates four different geometry shapes of Pareto fronts obtained by removing all dominated points from the union of the resulting Pareto fronts obtained from the application of the PF-SMG and PF-MG algorithms. In the next subsection, the quality of approximated Pareto fronts obtained from the two algorithms is measured and compared in terms of the Purity and Spread metrics.

\begin{figure}[bt]
   \centering
   \subfloat[][SP1]{\includegraphics[width=.45\textwidth]{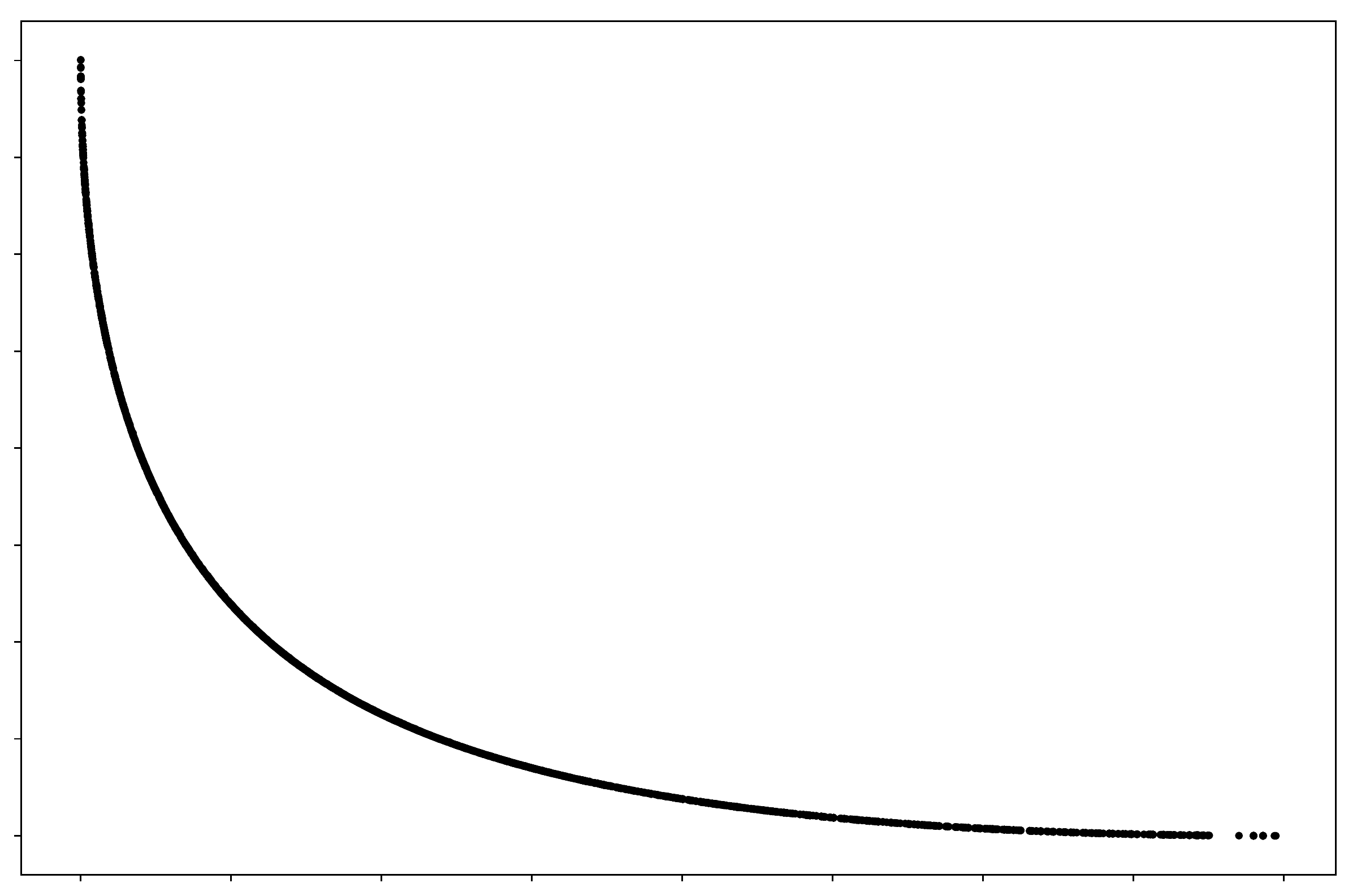}}\quad
   \subfloat[][FF1]{\includegraphics[width=.45\textwidth]{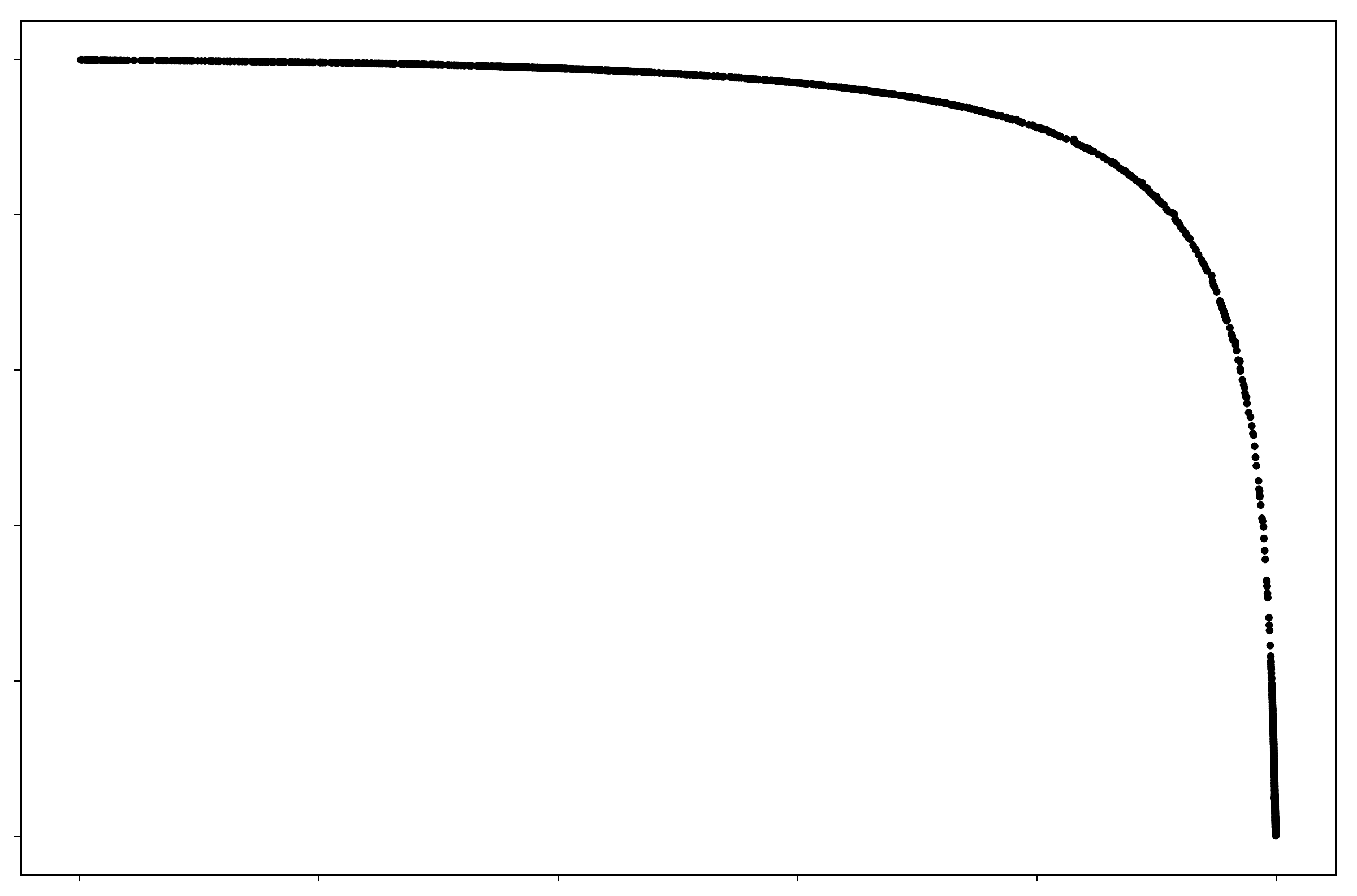}}\\
   \subfloat[][JOS2]{\includegraphics[width=.45\textwidth]{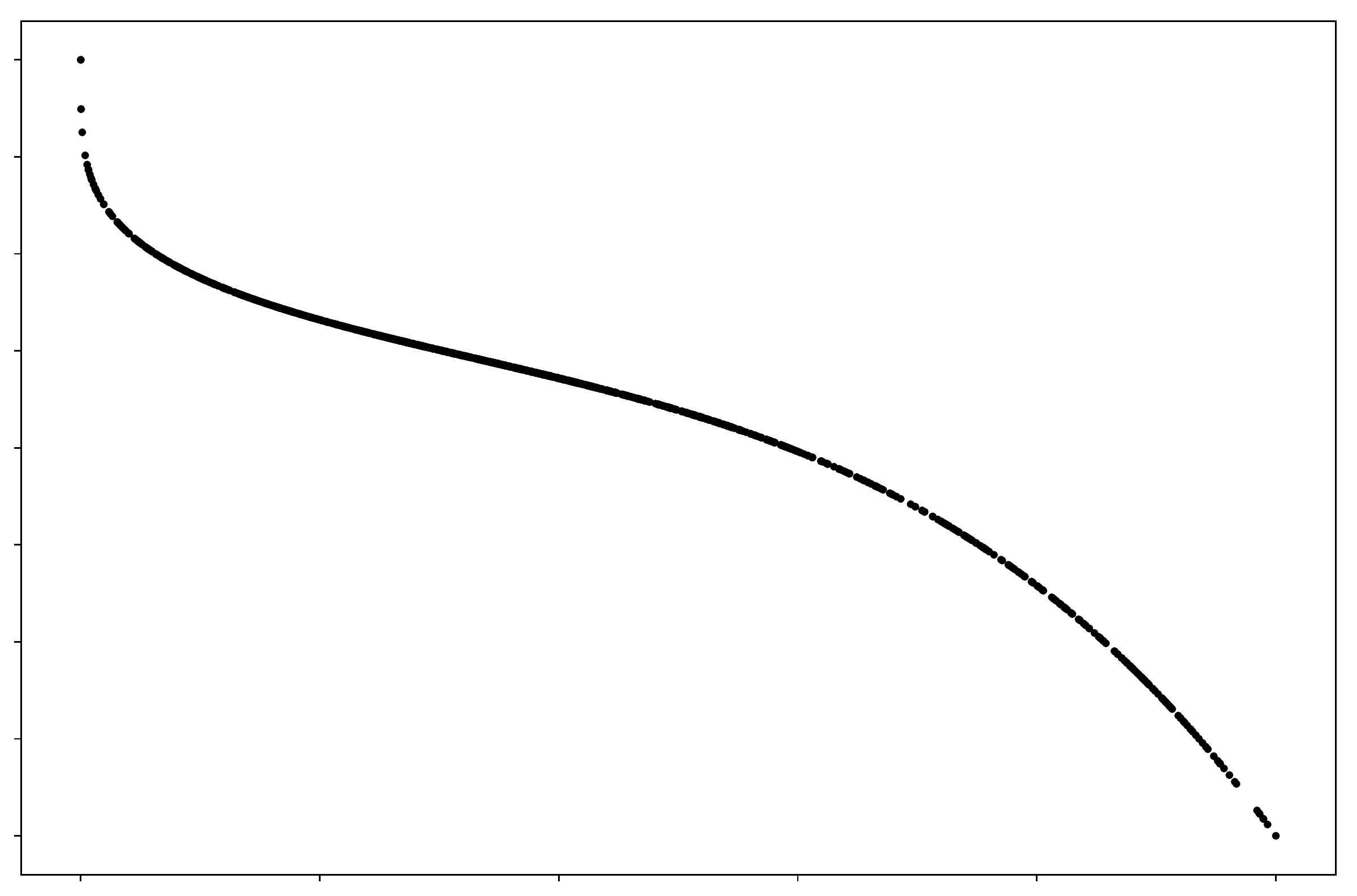}}\quad
   \subfloat[][ZDT3]{\includegraphics[width=.45\textwidth]{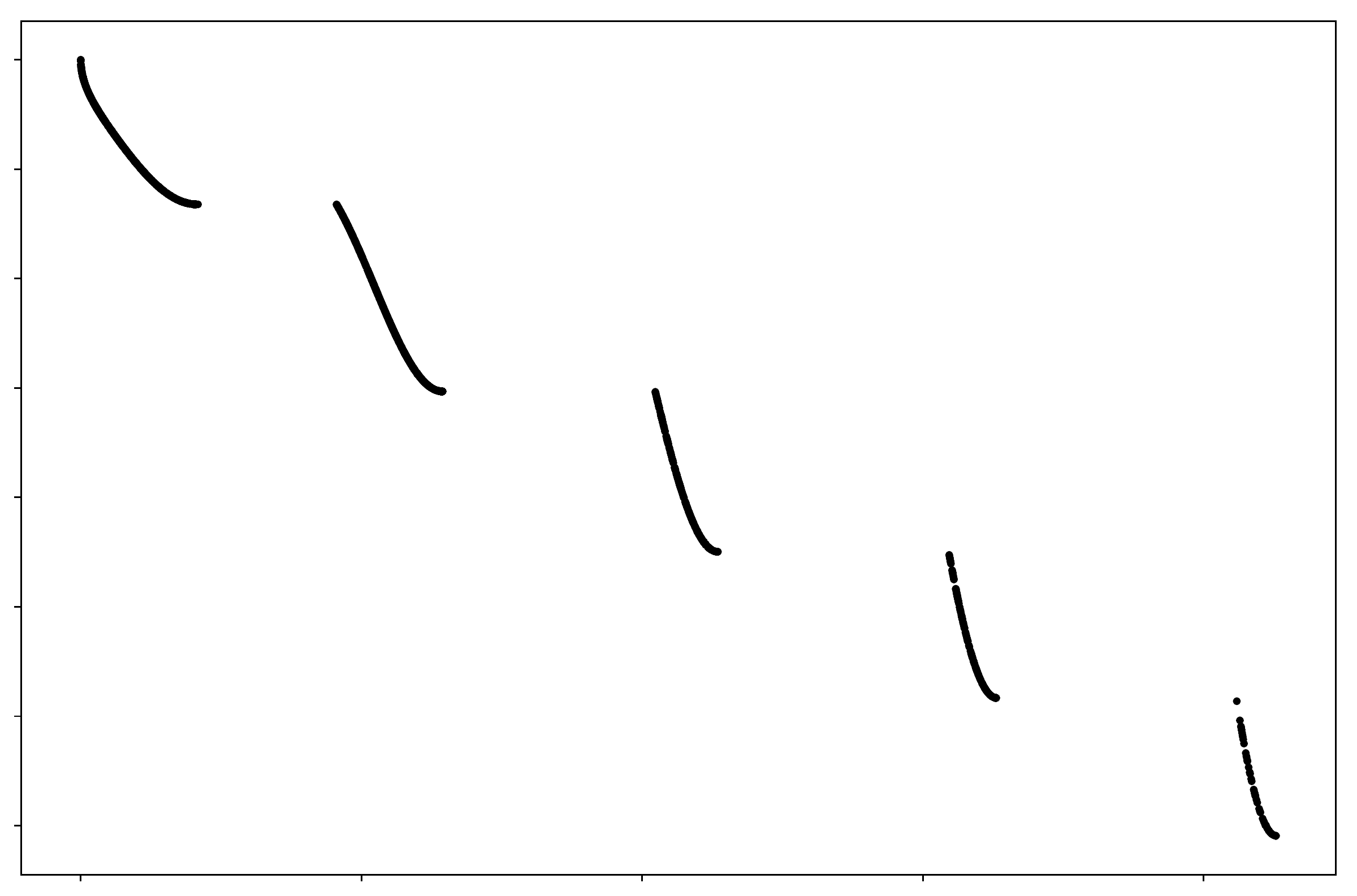}}
   \caption{Different geometry shapes of Pareto fronts: (a) Convex; (b) Concave; (c) Mixed (neither convex nor concave); (d) Disconnected.}
   \label{geometry_of_Pareto_fronts}
\end{figure}

\subsubsection{Numerical results and analysis}

For all problems, the initial step size was set to 0.3 for both PF-SMG and PF-MG algorithms. To be fair, we ran 10 times the PF-SMG algorithm for each problem and selected the one with the average value in $\Gamma$, i.e., the maximum size of the holes. (Although there is some randomization in PF-MG, its output does not differ significantly from one run to another.) The quality of the obtained Pareto fronts, the number of iterations, and the size of the Pareto front approximations when the algorithms are terminated are reported in Table~\ref{MOO_test_prob_metrics}. We also plot performance profiles, see Figure~\ref{preformance_profiles}, in terms of Purity and the two formula of Spread metrics.

\begin{figure}[H]
   \centering
   \subfloat[][Purity]{\includegraphics[width=.34\textwidth, angle=-90]{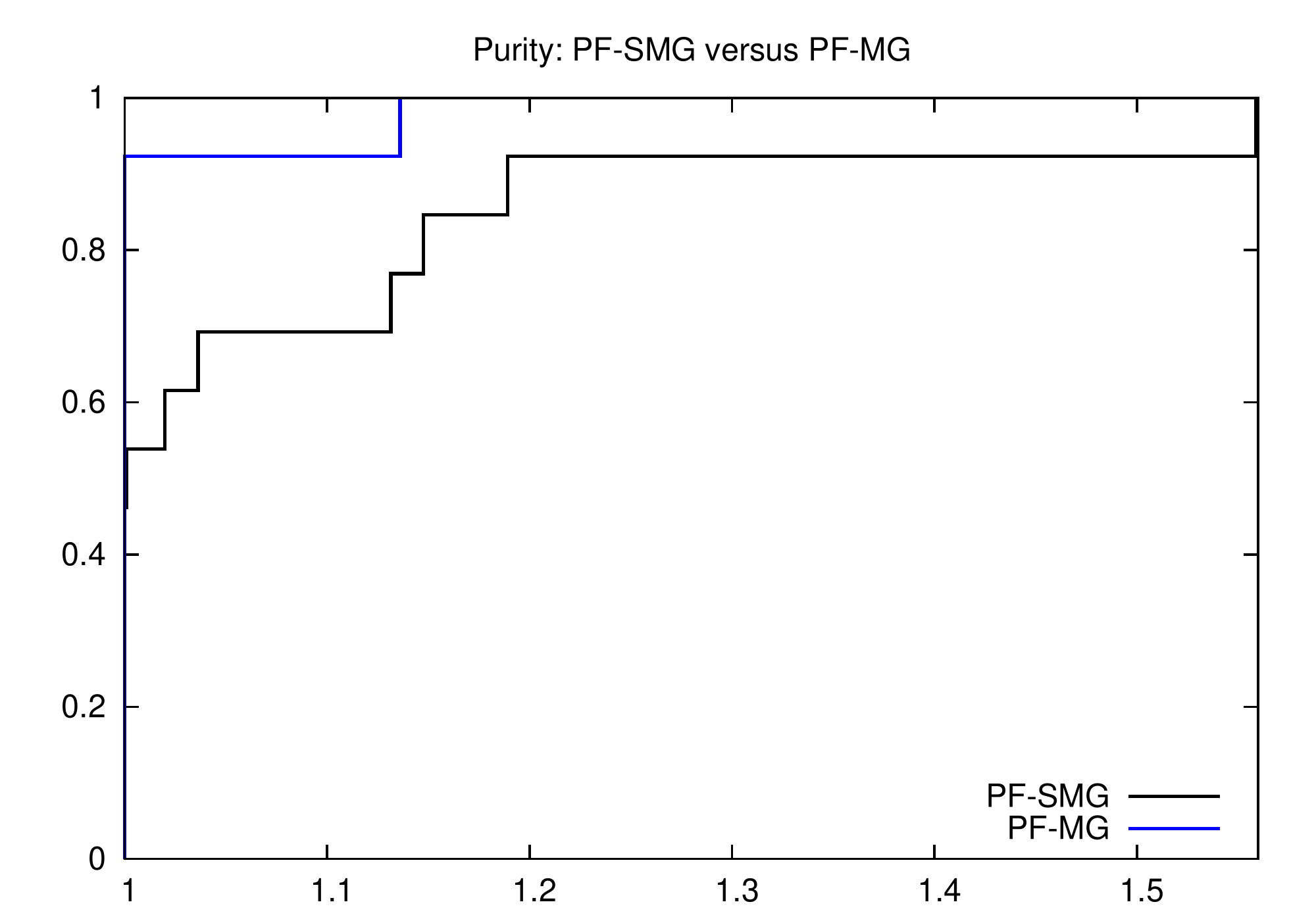}}\\
   \subfloat[][Spread: $\Gamma$]{\includegraphics[width=.34\textwidth, angle=-90]{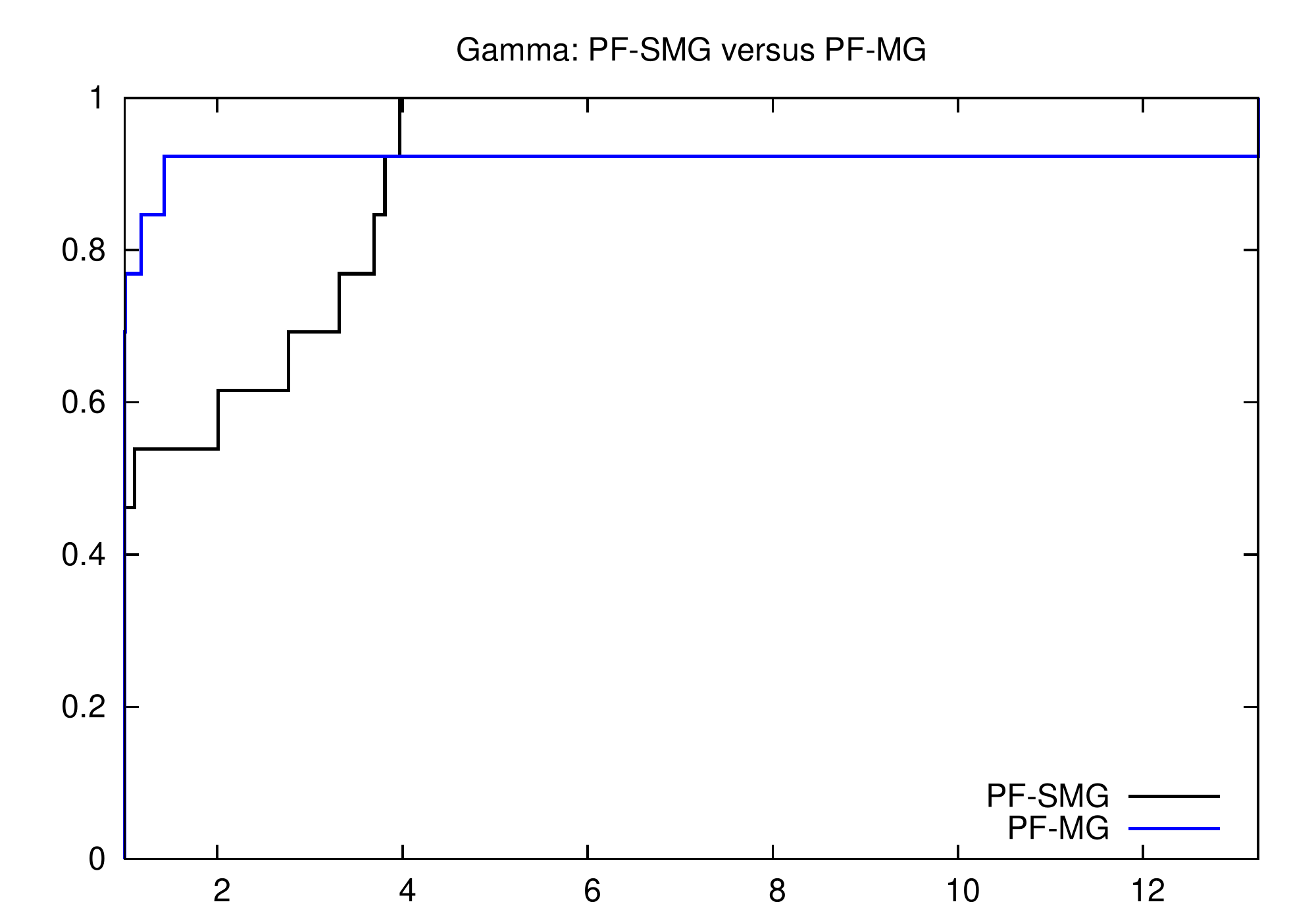}}\quad
   \subfloat[][Spread: $\Delta$]{\includegraphics[width=.34\textwidth, angle=-90]{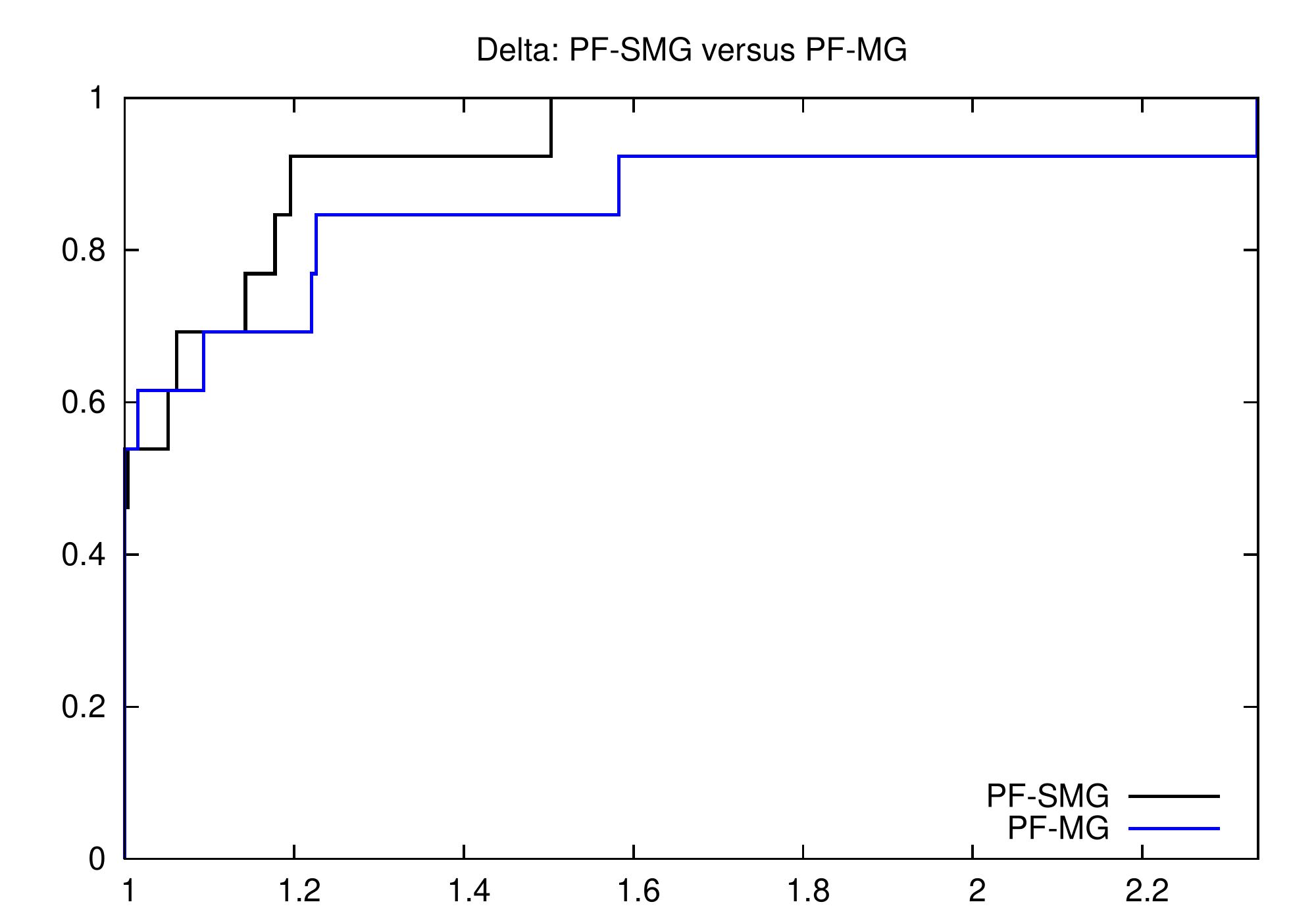}}
   \caption{Performance profiles in terms of Purity, $\Gamma$, and $\Delta$ repectively.}
   \label{preformance_profiles}
\end{figure}

Overall, the PF-MG algorithm produces Pareto fronts of higher Purity than the PF-SMG, which is reasonable since using the accurate gradient information results in points closer to the true Pareto front.
However, the Purity of Pareto fronts resulting from the PF-SMG is quite close to the one from the PF-MG in most of the testing problems.
Also, when we examine the quality of the fronts in terms of the Spread metrics (see $\Gamma$ and $\Delta$ in Table~\ref{MOO_test_prob_metrics}), their performances are comparable, which indicates that the proposed PF-SMG algorithm is able to produce well-spread Pareto fronts. For some problems like IM1 and FF1, it is observed that PF-SMG generates nondominated points faster than the PF-MG. This might be due to the fact PF-SMG has two sources of stochasticity, both in generating the points and in applying stochastic multi-gradient, whereas PF-MG is only stochastic in the generation of points.

On the other hand, perhaps due to the worse accuracy of stochastic multi-gradients, PF-SMG takes more iterations than PF-MG to achieve the same tolerance level. Nevertheless, suppose that the computational cost for computing the true gradients for each objective function is significantly higher than the one for obtaining the stochastic gradients. It is easy to consider scenarios when the computation cost of PF-MG would be far more expensive than for PF-SMG.

\clearpage
\begin{table}[H]
    \centering
    \begin{tabular}{c|c|cccccc}
    \toprule
     Problem & Algorithm & Purity & $\Gamma$ & $\Delta$ & \# Iter & $|\mathcal{L}_k|$\\ \midrule
     \multirow{2}{*}{ZDT1}
     & PF-MG & 1.000 & 0.0332 & 1.4404 & 26 & 1575 \\
     & PF-SMG & 1.000 & 0.0666 & 1.6958 & 26 & 1789 \\ \hline
     \multirow{2}{*}{ZDT2}
     & PF-MG & 1.000 & 0.9336 & 1.0407 & 48 & 1524 \\
     & PF-SMG & 1.000 & 0.0705 & 1.5637 & 32 & 1680 \\ \hline
     \multirow{2}{*}{ZDT3}
     & PF-MG & 0.999 & 0.1716 & 1.5941 & 84 & 1524 \\
     & PF-SMG & 0.999 & 0.6539 & 1.3005 & 70 & 1544\\ \hline
     \multirow{2}{*}{JOS2}
     & PF-MG & 1.000 & 0.1853 & 1.3520 & 24 & 1530 \\
     & PF-SMG & 1.000 & 0.7358 & 1.5445 & 18 & 2271 \\ \hline
     \multirow{2}{*}{SP1}
     & PF-MG & 0.996 & 0.0763 & 1.5419 & 24 & 1826 \\
     & PF-SMG & 0.880 & 0.2817 & 0.9742 & 102 & 1503 \\ \hline
     \multirow{2}{*}{IM1}
     & PF-MG & 0.992 & 0.0936 & 0.8879 & 18 & 1581 \\
     & PF-SMG & 0.973 & 0.2591 & 1.0613 & 16 & 2161 \\ \hline
     \multirow{2}{*}{FF1}
     & PF-MG & 0.982 & 0.0788 & 1.5637 & 46 & 1533\\
     & PF-SMG  & 0.630 & 0.0671 & 1.5701 & 20 & 1834\\ \hline
     \multirow{2}{*}{Far1}
     & PF-MG & 0.843 & 0.3800 & 1.5072 & 26 & 1741\\
     & PF-SMG & 0.958 & 0.4192 & 1.5996 & 44 & 1602 \\ \hline
     \multirow{2}{*}{SK1}
     & PF-MG & 1.000 & 24.6399 & 1.0053 & 68 & 1531 \\
     & PF-SMG & 0.999 & 24.6196 & 0.9195 & 48 & 1614 \\ \hline
     \multirow{2}{*}{MOP1}
     & PF-MG & 1.000 & 0.0329 & 0.9003 & 78 & 1505 \\
     & PF-SMG & 1.000 & 0.1091 & 0.9462 & 14 & 2036 \\ \hline
     \multirow{2}{*}{MOP2}
     & PF-MG & 1.000 & 0.0614 & 1.8819 & 140 & 1527 \\
     & PF-SMG & 0.841 & 0.0609 & 0.8057 & 124 & 1504 \\ \hline
     \multirow{2}{*}{MOP3}
     & PF-MG & 0.990 & 19.8772 & 1.7938 & 26 & 1530 \\
     & PF-SMG & 0.863 & 19.8667 & 1.7664 & 50 & 1571 \\ \hline
     \multirow{2}{*}{DEB41}
     & PF-MG & 0.953 & 26.8489 & 1.8430 & 14 & 1813\\
     & PF-SMG & 0.920 & 18.8147 & 1.5101 & 18 & 1997 \\
    \bottomrule
    \end{tabular}
    \caption{Comparison between resulting Pareto fronts from the PF-MG and PF-SMG algorithms.\label{MOO_test_prob_metrics}}
\end{table}

Two informative final notes. The Pareto fronts of problem SK1 and MOP3 are disconnected, and hence, their values of $\Gamma$ are  significantly larger than others. There exists a conflict between depth (Purity) and breadth (Spread) of the Pareto front. One can always tune some parameters, e.g., the number of starting points and the number of points generated per point at each iteration, to balance the Purity and Spread of the resulting Pareto fronts.

\section{Conclusions}
\label{conclude}

The stochastic multi-gradient (SMG) method is an extension of the stochastic gradient method from single to multi-objective optimization (MOO). However, even based on the assumption of unbiasedness of the stochastic gradients of the individual functions, it has been observed
in this paper that there exists a bias between the stochastic multi-gradient and the corresponding true multi-gradient, essentially due to the composition with the solution of a quadratic program (see~\eqref{subproblem3}). Imposing a condition on the amount of tolerated biasedness, we established sublinear convergence rates, $\mathcal{O}(1/k)$ for strongly convex and $\mathcal{O}(1/\sqrt{k})$ for convex objective functions, similar to what is known for single-objective optimization, except that the optimality gap was measured in terms of a weighted sum of the individual functions. We realized that the main difficulty in establishing these rates for the multi-gradient method came from the unknown limiting behavior of the weights generated by the algorithm. Nonetheless, our theoretical results contribute to a deeper understanding of the convergence rate theory of the classical stochastic gradient method in the MOO setting.

To generate an approximation of the entire Pareto front in a single run, the SMG algorithm was framed into a Pareto-front one, iteratively updating a list of nondominated points. The resulting PF-SMG algorithm was shown to be a robust technique for smooth stochastic MOO since it has produced well-spread and sufficiently accurate Pareto fronts, while being relatively efficient in terms of the overall computational cost.
Our numerical experiments on binary logistic regression problems showed that solving a well-formulated MOO problem can be a novel tool for identifying biases among potentially different sources of data and improving the prediction fairness.

As it is well known, noise reduction~\cite{ADefazio_FBach_SLacoste-Julien_2014,RJohnson_TZhang_2013,BTPolyak_ABJuditsky_1992,SShalev-Shwartz_etal_2011} was studied intensively during the last decade to improve the performance of the
stochastic gradient method. Hence, a relevant topic for our future research is the
study of noise reduction in the setting of the stochastic multi-gradient method for MOO.
More applications and variants of the algorithm can be further explored.
For example, we have not yet tried to solve stochastic MOO problems when the feasible region is different from box constraints.
We could also consider the incorporation of a proximal term and in doing so we could handle nonsmooth regularizers.
Other models arising in supervised machine learning, such as the deep learning, could be also framed into an MOO context.
Given that the neural networks used in deep learning give rise to nonconvex objective functions, we would also be interested in developing the convergence rate theory for the SMG algorithm in the nonconvex case.

\appendix
\section{Proof of Theorem~\ref{th2:convex}}
\label{append_proof_conv}
\begin{proof}
By applying inequalities \eqref{ineq:theorem_bound1}, \eqref{ineq:theorem_bound2}, and \eqref{subprob_lip_lambda2} to \eqref{proof_ineq_step1}, one obtains
\begin{equation}
\label{proof_theorem_step1}
\begin{split}
\mathbb{E}_{w_k}[ \| x_{k+1} - x_*\|^2 ] \;\leq\; & \| x_{k} - x_*\|^2  + \alpha_k^2(L^2_g + 2\Theta(L_{\nabla S}+ \beta L_{\nabla S}M_{S}))  \\ & 2\alpha_k \mathbb{E}_{w_k}[\nabla_x S(x_k, \lambda_k)]^\top (x_k - x_*).
\end{split}
\end{equation}
From convexity one has
\begin{equation}
    S(x_k, \lambda_k) - S(x_*, \lambda_k)  \;\leq\;  \nabla_x S(x_k,\lambda_k)^\top(x_k - x_*).
    \label{ineq:convex}
\end{equation}
Then, plugging \eqref{ineq:convex} into inequality \eqref{proof_theorem_step1} and rearranging yield
\begin{align*}
    2\alpha_k (\mathbb{E}_{w_k}[S(x_k, \lambda_k)] -  \mathbb{E}_{w_k}[S(x_*, \lambda_k)])  \;\leq\; & \| x_{k} - x_*\|^2  - \mathbb{E}_{w_k}[\| x_{k+1} - x_*\|^2] \\
    & + \alpha_k^2(L^2_g + 2\Theta(L_{\nabla S}+ \beta L_{\nabla S}M_{S})).
\end{align*}
For simplicity denote $\hat{M} = L^2_g + 2\Theta(L_{\nabla S}+ \beta L_{\nabla S}M_{S})$. Dividing both sides by $\alpha_k$ and taking total expectations on both sides allow us to write
\begin{equation*}
    \begin{split}
   2(\mathbb{E}[S(x_k, \lambda_k)] -  \mathbb{E}[S(x_*, \lambda_k)]) \;\leq\;  & \frac{\mathbb{E}[\| x_{k} - x_*\|^2] - \mathbb{E}[\| x_{k+1} - x_*\|^2]}{\alpha_k} + \alpha_k \hat{M}.
    \end{split}
\end{equation*}
Replacing $k$ by $s$ in the above inequality and summing over $s = 1,\ldots, k$ lead to
\begin{align*}
    2\sum_{s= 1}^k (\mathbb{E}[S(x_s, \lambda_s)] -  \mathbb{E}[S(x_*, \lambda_s)])
     \;\leq\;  & \frac{1}{\alpha_1}\mathbb{E}[\|x_{1} - x_*\|^2] + \sum_{s= 1}^k \alpha_s\hat{M}\\
     & + \sum_{s = 2}^k (\frac{1}{\alpha_{s}} - \frac{1}{\alpha_{s - 1}})\mathbb{E}[\| x_{s} - x_*\|^2] \\
     \;\leq\; & \frac{\Theta^2}{\alpha_1} + \sum_{s = 2}^k (\frac{1}{\alpha_{s}} - \frac{1}{\alpha_{s - 1}}) \Theta^2 + \sum_{s = 1}^k \alpha_s \hat{M} \\
     \;\leq\; & \frac{\Theta^2}{\alpha_k} + \sum_{s = 1}^k \alpha_s \hat{M}.
\end{align*}
Then, using $\alpha_s = \frac{\bar{\alpha}}{\sqrt{s}}$ and dividing both sides by $2k$ in the last inequality give us
\begin{equation}
\label{theorem2::last_step2}
    \frac{1}{k}\sum_{s= 1}^k (\mathbb{E}[S(x_s, \lambda_s)] -  \mathbb{E}[S(x_*, \lambda_s)]) \;\leq \;\frac{\Theta^2}{2\bar{\alpha}\sqrt{k}}  + \frac{\bar{\alpha}\hat{M}}{\sqrt{k}},
\end{equation}
from the fact $\sum_{s = 1}^k \frac{\bar{\alpha}}{\sqrt{s}} \leq 2\bar{\alpha} \sqrt{k}$.  In the left-hand side, one can use the following inequality
    \begin{equation}
    \label{theorem2::last_step1}
      \min_{s = 1, \ldots, k} \mathbb{E}[S({{x}_k, \lambda}_k)] - \mathbb{E}[S(x_*, \bar{\lambda}_k)] \;\leq\; \frac{1}{k}\sum_{s= 1}^k (\mathbb{E}[S(x_s, \lambda_s)] -  \mathbb{E}[S(x_*, \lambda_s)]),
   \end{equation}
where $\bar{\lambda}_k = \textstyle \frac{1}{k} \sum_{s = 1}^k \lambda_s$.
The final result follows from combining~\eqref{theorem2::last_step2} and~\eqref{theorem2::last_step1}.
\end{proof}

\section{Metrics for comparison}
\label{append_metrics_comp}
Let $\mathcal{A}$ denote the set of algorithms/solvers and $\mathcal{T}$ denote the set of test problems. The Purity metric measures the accuracy of an approximated Pareto front. Let us denote $H(\mathcal{P}_{a, t})$ as an approximated Pareto front of problem $t$ computed by algorithm~$a$. We approximate the ``true'' Pareto front $H(\mathcal{P}_t)$ for problem~$t$ by all the nondominated points in $\cup_{a \in \mathcal{A}} H(\mathcal{P}_{a, t})$. Then, the Purity of a Pareto front computed by algorithm $a$ for problem $t$ is the ratio $r_{a, t} = |H(\mathcal{P}_{a, t}) \cap H(\mathcal{P}_t)|/|H(\mathcal{P}_{a, t})| \in [0, 1]$, which calculates the percentage of ``true'' nondominated solutions among all the nondominated points generated by algorithm $a$. A higher ratio value corresponds to a more accurate Pareto front. In our context, it is highly possible that the Pareto front obtained from PF-MG algorithm dominates that from the PF-SMG algorithm since the former one uses true gradients.

The Spread metric is designed to measure the extent of the point spread in a computed Pareto front, which requires the computation of extreme points in the objective function space~$\mathbb{R}^m$.
Among the $m$ objective functions, we select a pair of nondominated points in $\mathcal{P}_t$ with the highest pairwise distance (measured using $h_i$) as the pair of extreme points.
More specifically, for a particular algorithm $a$, let $(x_{\min}^i, x_{\max}^i) \in \mathcal{P}_{a, t}$ denote the pair of nondominated points where $x_{\min}^i = \argmin_{x \in \mathcal{P}_{a, t}} h_i(x)$ and $x_{\max}^i = \argmax_{x \in \mathcal{P}_{a, t}} h_i(x)$. Then, the pair of extreme points is $(x_{\min}^k, x_{\max}^k)$ with $k = \argmax_{i = 1, \ldots, m} h_i(x_{\max}^i) - h_i(x_{\min}^i)$.

The first Spread formula calculates the maximum size of the holes for a Pareto front. Assume algorithm $a$ generates an approximated Pareto front with $M$ points, indexed by $1, \ldots, M$, to which the extreme points $H(x_{\min}^k)$,$H(x_{\max}^k)$ indexed by $0$ and $M+1$ are added. Denote the maximum size of the holes by $\Gamma$. We have
\begin{equation*}
\label{Spread_Gamma}
    \Gamma \;=\; \Gamma_{a, t} \;=\; \max_{i \in \{1, \ldots, m\}} \left(\max_{j \in \{1, \ldots, M\}}\{\delta_{i,j}\}\right),
\end{equation*}
where $\delta_{i,j} = h_{i,j + 1} - h_{i, j}$, and we assume each of the objective function values $h_i$ is sorted in an increasing order.

The second formula was proposed by~\cite{KDeb_etal_2002} for the case $m = 2$ (and further extended to the case $m \geq 2$ in~\cite{ALCustodio_etal_2011}) and indicates how well the points are distributed in a Pareto front. Denote the point spread by~$\Delta$. It is computed by the following formula:
\begin{equation*}
\label{Spread_Delta}
    \Delta \;=\; \Delta_{a, t} \;=\; \max_{i \in \{1, \ldots, m\}} \left(\frac{\delta_{i, 0} + \delta_{i, M} + \sum_{j = 1}^{M-1}|\delta_{i, j} - \bar{\delta}_i|}{\delta_{i, 0} + \delta_{i, M} + (M-1)\bar{\delta}_i} \right),
\end{equation*}
where $\bar{\delta}_i, i = 1, \ldots, m$ is the average of $\delta_{i, j}$ over $j = 1, \ldots, M -1$. Note that the lower $\Gamma$ and $\Delta$ are, the more well distributed the Pareto front is.

\small
\bibliographystyle{plain}
\bibliography{smg-moo}

\end{document}